
\newif\ifdcg
\dcgfalse

\ifdcg
\RequirePackage{fix-cm}
\documentclass[smallextended,numbook,envcountsame,envcountreset,nospthms]{svjour3}
%
\usepackage{graphicx}
%
%
%
%
%
\usepackage{amsfonts,amsmath,amssymb,amscd,enumitem,color}
\usepackage{mathtools}
\usepackage[hidelinks]{hyperref}
\makeatletter
\let\cl@chapter\undefined
\makeatletter
\usepackage[english]{babel}
\usepackage[utf8]{inputenc}

\usepackage{amsthm}
\usepackage[capitalize]{cleveref}
\newtheorem{theorem}{Theorem}
\newtheorem{proposition}[theorem]{Proposition}
\newtheorem{definition}[theorem]{Definition}
\newtheorem{lemma}[theorem]{Lemma}
\newtheorem{problem}[theorem]{Problem}
\newtheorem{conjecture}[theorem]{Conjecture}
\newtheorem{example}[theorem]{Example}
\numberwithin{theorem}{section}
\newtheorem{corollary}[theorem]{Corollary}
\newtheorem{question}[theorem]{Question}
\newtheorem{assumption}[theorem]{Assumption}

\author{Brett Leroux \and Luis Rademacher}

\institute{Brett Leroux \at
              University of California, Davis, CA, USA \\
              \email{beleroux@ucdavis.edu}           
           \and
           Luis Rademacher \at
              University of California, Davis, CA, USA \\
              \email{lrademac@ucdavis.edu}
}

\date{Received: date / Accepted: date}

\else

\documentclass[11pt]{article}
\usepackage{graphicx}
\usepackage{amsfonts,amsmath,amssymb,amscd,enumitem,color}
\usepackage{mathtools}
\usepackage{accents}
\usepackage{amsthm}
\usepackage[hidelinks]{hyperref}
\usepackage[capitalize]{cleveref}
\usepackage[english]{babel}
\usepackage[utf8]{inputenc}
\usepackage{hyperref}
 
\newtheorem{theorem}{Theorem}
\newtheorem{proposition}[theorem]{Proposition}
\newtheorem{definition}[theorem]{Definition}
\newtheorem{corollary}[theorem]{Corollary}
\newtheorem{lemma}[theorem]{Lemma}

\newtheorem{question}[theorem]{Question}

\numberwithin{subcase}{case}

\newtheorem{assumption}[theorem]{Assumption}
\numberwithin{subsubcase}{subcase}

\numberwithin{theorem}{section}

\usepackage{fullpage}

\newenvironment{acknowledgements}{\paragraph{Acknowledgements}}{}

\author{Brett Leroux, Luis Rademacher}

\def\keywords#1{\par\addvspace\medskipamount{\rightskip=0pt plus1cm
\def\and{\ifhmode\unskip\nobreak\fi\ $\cdot$
}\noindent{Keywords:}\enspace\ignorespaces#1\par}}

\fi

\newcommand{\area}{\operatorname{area}}
\newcommand{\gap}{(c \log n)}
\newcommand{\length}{\operatorname{length}}
\newcommand{\one}{\mathbf{1}}
\newcommand{\ZZ}{\mathbb{Z}}
\newcommand{\giventhat}{\mid}
\newcommand{\pdist}{P}
\newcommand{\pset}{S}

\title{Improved bounds for the expected number of $k$-sets}

\usepackage{dirtytalk}

\def\final{1}  

\ifnum\final=0  
\newcommand{\lnote}[1]{[{\small Luis: \bf #1}]}
\newcommand{\bnote}[1]{[{\small Brett: \bf #1}]}
\newcommand{\anonnote}[1]{[{\small anon: \bf #1}]}
\newcommand{\sidecomment}[1]{\marginpar{\tiny #1}}
\newcommand{\details}[1]{{\color{blue}\ [[#1]] }}
\else 
\newcommand{\lnote}[1]{}
\newcommand{\bnote}[1]{}
\newcommand{\anonnote}[1]{}
\newcommand{\sidecomment}[1]{}
\newcommand{\details}[1]{}
\fi  


\newcommand{\Rl}{\operatorname{\mathbb{R}}}

\newcommand{\RR}{\mathbb{R}}

\newcommand{\Tor}{\operatorname{\Tor}}

\newcommand{\aff}{\operatorname{aff}}

\newcommand{\conv}{\operatorname{conv}}
\newcommand{\interior}{\operatorname{int}}
\newcommand{\suchthat}{\mathrel{:}}
\newcommand{\bd}{\operatorname{bd}}

\newcommand{\e}{\operatorname{\mathbb{E}}}
\newcommand{\pr}{\operatorname{\mathbb{P}}}
\newcommand{\ud}{\mathrm{d}}

\newcommand{\floor}[1]{\lfloor{#1}\rfloor}
\newcommand{\norm}[1]{\lVert{#1}\rVert}
\newcommand{\abs}[1]{\lvert#1\rvert}
\newcommand{\card}[1]{\lvert#1\rvert}
\newcommand{\eps}{\epsilon}
\newcommand{\cl}{\operatorname{cl}}
\newcommand{\fC}{\operatorname{\mathcal{C}}}

\makeatletter
\pdfstringdefDisableCommands{\let\(\fake@math}
\newcommand\fake@math{}
\def\fake@math#1\){[math]}
\makeatother

\begin{document}

\maketitle

\begin{abstract}
Given a finite set of points $S\subset\mathbb{R}^d$, a \emph{$k$-set} of $S$ is a subset $A \subset S$ of size $k$ which can be strictly separated from $S \setminus A $ by a hyperplane. 
Similarly, a \emph{$k$-facet} of a point set $S$ in general position is a subset $\Delta\subset S$ of size $d$ such that the hyperplane spanned by $\Delta$ has $k$ points from $S$ on one side.
For a probability distribution $P$ on $\mathbb{R}^d$, we study $E_P(k,n)$, the expected number of $k$-facets of a sample of $n$ random points from $P$. 
When $P$ is a distribution on $\mathbb{R}^2$ such that the measure of every line is 0, we show that $E_P(k,n) = O(n(k+1)^{1/4})$.
Our argument is based on a technique by B\'{a}r\'{a}ny and Steiger.
We study how it may be possible to improve this bound using the continuous version of the polynomial partitioning theorem.
This motivates a question concerning the points of intersection of an algebraic curve and the $k$-edge graph of a set of points.

We also study a variation on the $k$-set problem for the set system whose set of ranges consists of all translations of some strictly convex body in the plane. 
The motivation is to show that the technique by B\'{a}r\'{a}ny and Steiger is tight for a natural family of set systems.
For any such set system, we determine bounds for the expected number of $k$-sets which are tight up to logarithmic factors. 
\end{abstract}

\section{Introduction}
Let $S$ be a finite set of points in $\Rl^d$. A subset $A$ of $S$ of size $k$ is a \emph{$k$-set} of $S$ if there exists an affine hyperplane which strictly separates $A$ from $S \setminus A$. 
We use $a_d(k,n)$ to denote the maximal number of $k$-sets over all configurations of $n$ points in $\Rl^d$. The \emph{$k$-set problem} asks one to determine the asymptotic behavior of $a_d(k,n)$.

The $k$-set problem in the plane was first raised by A.~Simmons in unpublished work. Straus, also in unpublished work, gave a construction showing an $\Omega(n\log n)$ lower bound. Lov\'{a}sz \cite{lovasz1971number} published the first paper on $k$-sets, establishing an $O(n^{3/2})$ upper bound. See also the paper \cite{MR0363986} of Erd\H{o}s, Lov\'{a}sz, Simmons,  and Straus. Despite the efforts of many, the $k$-set problem is still not well understood even in the plane. The best known bounds are $a_2(k,n) = n e^{\Omega(\sqrt{\log k})}$ \cite{DBLP:conf/compgeom/Toth00,Tothlower,MR2405686} and $a_2(k,n) = O(n(k+1)^{1/3})$ \cite{Dey1997ImprovedBO,DBLP:journals/dcg/Dey98}. See \cite{WagnerSurvey} for an overview of results in higher dimensions. 

When studying the $k$-set problem, one usually only considers point sets which are in general position. 
\begin{definition}\label{def:genlinear}
A set of at least $d+1$ points in $\Rl^d$ is in \emph{general linear position} if no $d+1$ (and thus, fewer) points are affinely dependent. 
\end{definition}
When we say general position, we always mean general linear position. This reduction is justified by the observation that the maximum number of $k$-sets is attained by a set of points in general position (see for example \cite{WagnerSurvey}). 
For point sets in general position, one can study the closely related concept of \emph{$k$-facets}.

Let $S$ be a set of $n$ points in general position in $\Rl^d$. A subset $\Delta \subset S$ of size $d$ is a \emph{$k$-facet} of $S$ if the open halfspace on one side of $\aff \Delta$ contains exactly $k$ points from $S$. We use $e_d(k,n)$ to denote the maximal number of $k$-facets over all configurations of $n$ points in general position in $\Rl^d$. It can be shown that for fixed $d$, the functions $e_d(k,n)$ and $a_d(k,n)$ have the same asymptotic behavior \cite{WagnerSurvey}. In this paper, it is more convenient for us to work with $k$-facets. Furthermore, our main results are for the planar version of the $k$-facet problem. In the plane, $k$-facets are also known as \emph{$k$-edges}. 

It is widely believed that the true value of $e_2(k,n)$ is closer to the best known lower bound than to the best known upper bound. Indeed, Erd\H{o}s et al. conjectured in \cite{MR0363986} that $e_2(k,n) = O(n^{1+\epsilon})$ for any $\epsilon>0$. Some support for this conjecture is provided by the  results one can obtain for the probabilistic version of the $k$-facet problem which was originally studied by B\'{a}r\'{a}ny and Steiger in \cite{BaranySteiger}.
Given a probability distribution $P$ on $\Rl^d$, what is the expected number $E_P(k,n)$ of $k$-facets of $X$, a sample of $n$ independent random points from $P$?
Recall that the $k$-facet problem is only defined for point sets in general position. 
For this reason, in all of our results concerning $E_P(k,n)$, our attention is restricted to distributions $P$ such that the measure of every hyperplane is~0.
This is the minimal assumption on distributions $P$ which guarantees that a sample of points from $P$ is in general position with probability 1. We refer to the original $k$-facet problem as the \emph{deterministic} version. 
By the \emph{probabilistic} version of the $k$-facet problem, we mean the question of the value of $E_P(k,n)$. 

B\'{a}r\'{a}ny and Steiger showed in \cite{BaranySteiger} that $E_P(k,n) = O(n^{d-1})$ if $P$ is spherically symmetric. 
Also, if $P$ is the uniform  distribution on a convex body in $\Rl^2$, they show that $E_P(k,n) =O(n)$. 
Similar sorts of bounds were obtained in \cite{clarkson} and \cite{MR4194871}: Clarkson showed in \cite{clarkson} that $E_P(k,n) = O(n^{d-1})$ if $P$ is a coordinate-wise independent distribution. A result of Chiu et al. (\cite[Theorem 3]{MR4194871}) improves the bound obtained by B\'{a}r\'{a}ny and Steiger when $P$ is a specific type of spherically symmetric distribution. These results give some evidence that the conjecture of Erd\H{o}s et al. that $e_2(k,n) = O(n^{1+\epsilon})$ for any $\epsilon>0$ (and the extension to higher dimensions, i.e., $e_d(k,n) = O(n^{d-1+\epsilon})$ for any $\epsilon>0$) may be true.

However, when one considers more general probability distributions, the asymptotic behavior of the expected number of $k$-edges becomes more difficult to determine. In fact, as noted in \cite[Section 4.2]{WagnerSurvey}, there is some reason to believe that the probabilistic version of the $k$-facet problem may be more or less the same as the original: Circa publication of  B\'{a}r\'{a}ny and Steiger's paper \cite{BaranySteiger}, the best known lower bound for the deterministic $k$-facet problem in $\Rl^2$ was $e_d( \frac{n-2}{2},n) = \Omega(n \log n)$ \cite{MR0363986}. 
Using the construction in \cite{MR0363986}, B\'{a}r\'{a}ny and Steiger construct a probability distribution $P$ with $E_P(\frac{n-2}{2},n) = \Omega(n\log n)$. The $\Omega(n \log n)$ lower bound for the deterministic $k$-facet problem was improved to $e_d( \frac{n-2}{2},n)=ne^{\Omega(\sqrt{\log n})}$ in \cite{Tothlower}. As noted in \cite{WagnerSurvey}, it is possible to use the construction in \cite{Tothlower} to construct a distribution $P'$ with $E_{P'}(\frac{n-2}{2},n) =ne^{\Omega(\sqrt{\log n})}$. See \cite{WagnerSurvey} for some more details. 

\section*{Main results}
Our main results are summarized below, and described in more detail in the rest of the introduction.
\begin{itemize}
\item
\textbf{Expected number of $k$-edges.} For any Borel probability $P$ on $\Rl^2$ such that the measure of every line is 0, we show that the expected number of $k$-edges $E_P(k,n)$ of a sample of $n$ points is $O(n(k+1)^{1/4})$ (\cref{thm:5/4}). This is an improvement to the best known bound for the maximum number of $k$-edges of an $n$ point set in the plane which is the $O(n(k+1)^{1/3})$ by Dey \cite{Dey1997ImprovedBO,DBLP:journals/dcg/Dey98}.

\item
\textbf{Translations of a fixed convex body in the plane.} We study a natural variation on the $k$-set problem where the separating curves are translations of the boundary of some strictly convex body in the plane instead of lines as in the original $k$-set problem. Let $C \subset \Rl^2$ be some strictly convex body. For a set $S\subset \Rl^2$ of $n$ points, a \emph{$T_C$-$k$-set} of $S$ is a subset $T \subset S$ of size $k$ such that there exists a translation of $C$ which contains $T$ in its interior and contains no point in $S \setminus T$ (\cref{def:T_C}). For certain distributions on $\Rl^2$, we show that the expected number of $T_C$-$k$-sets of a sample of $n$ random points is $\tilde\Theta(n^{3/2})$ (where $\tilde\ $ means that polylogarithmic factors are ignored) (\cref{thm:halvingaverage,thm:vcavg}). 

\end{itemize}

\subsection{Expected number of \texorpdfstring{$k$}{k}-edges}

B\'{a}r\'{a}ny and Steiger studied the expected number of $k$-facets using an integral formula for $E_P(k,n)$. 
We also use the formula in the proofs of our main results so we describe it here. 
Let $X_1, \dotsc, X_d$ be $d$ random points drawn from $P$. 
The assumption that the measure of every hyperplane is zero implies that $\aff(X_1,\dots,X_d)$ is a hyperplane with probability 1. 
The same assumption implies that hyperplane $\aff(X_1,\dots,X_d)$ does not contain a line parallel to the $d$th standard basis vector $e_d$ with probability 1.
Therefore, choosing the direction of $e_d$ as the \say{up} direction, we can use $\aff(X_1,\dots,X_d)^+$ to denote the open half space above $\aff(X_1,\dots,X_d)$, i.e., in the direction of $e_d$. 
Similarly, we use $\aff(X_1,\dots,X_d)^-$ to denote the open half space below $\aff(X_1,\dots,X_d)$, i.e., in the direction of $-e_d$.
Define
\[
G_P(t) = \mathbb{P}\big(P(\aff(X_1,\dots,X_d)^+)\le t\big).
\]
Then given a sample $X$ of $n$ random points from $P$, the expected number of $k$-facets of $X$ is
\begin{equation}\label{eq:BaranySteiger}
\begin{split}
E_P(k,n) &=  \sum_{F \in \binom{X}{d}} \mathbb{P} \bigl(\aff(F)^+ \text{ or } \aff(F)^- \text{ contains exactly }k \text{ points of } X  \bigr) \\
&= 2\binom{n}{d}\binom{n-d}{k} \int_0^1 t^k(1-t)^{n-d-k} \ud G_P(t). 
\end{split}
\end{equation}

First, we observe that \cref{eq:BaranySteiger} can be used to immediately obtain the following upper bound for the expected number of $\frac{n-d}{2}$-facets of a sample of $n$ points from \emph{any}\footnote{Recall we assume that hyperplanes have measure 0. In all our results we also restrict to Borel measures. The Borel assumption guarantees that every open halfspace is measurable which is necessary for \cref{eq:BaranySteiger} (and, in particular, the definition of $G_P(t)$) to make sense. In fact the assumption that $P$ is Borel is the minimal assumption that guarantees that all halfspaces are measurable because the collection of open halfspaces generates the Borel sigma algebra on $\Rl^d$.} probability distribution on $\Rl^d$. 
The bound is much weaker than the bounds obtained by B\'{a}r\'{a}ny and Steiger for special cases of the distribution, but it is still non-trivial and also applies to all distributions (not only
the spherically symmetric ones).
\begin{theorem}\label{thm:basic}
If $P$ is a Borel probability distribution on $\Rl^d$ such that the measure of any hyperplane is zero, then $E_P(\frac{n-d}{2},n)= O(n^{d-1/2})$ (where the constants in big-$O$ depend only on $d$).
\end{theorem}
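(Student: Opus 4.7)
The plan is to apply the integral formula \eqref{eq:BaranySteiger} directly with $k = (n-d)/2$ and bound each factor by its pointwise worst case, without using any structural property of $P$ beyond the measurability assumption that makes the formula valid. The only nontrivial input is a sharp estimate on the central binomial coefficient, which produces the savings factor $1/\sqrt{n}$ relative to the trivial bound $O(n^d)$.

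First, I would specialize \eqref{eq:BaranySteiger} to $k = (n-d)/2$ (assuming $n-d$ even; the odd case is handled identically with $\lfloor (n-d)/2 \rfloor$, and only affects constants) to obtain
\[
E_P\!\left(\tfrac{n-d}{2},n\right) = 2\binom{n}{d}\binom{n-d}{(n-d)/2}\int_0^1 \bigl(t(1-t)\bigr)^{(n-d)/2} \,\ud G_P(t).
\]
Next, I would bound the integrand pointwise: since $t(1-t) \le 1/4$ on $[0,1]$ with equality only at $t=1/2$, the integrand is at most $4^{-(n-d)/2} = 2^{-(n-d)}$ uniformly in $t$. Because $G_P$ is the cumulative distribution function of a probability measure on $[0,1]$, the total variation of $G_P$ is at most $1$, and therefore the integral is bounded above by $2^{-(n-d)}$.

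For the remaining combinatorial factors, Stirling's formula gives $\binom{n-d}{(n-d)/2} = \Theta\!\left(2^{n-d}/\sqrt{n-d}\right)$, and trivially $\binom{n}{d} = O(n^d)$ since $d$ is fixed. Multiplying the three estimates yields
\[
E_P\!\left(\tfrac{n-d}{2},n\right) = O(n^d) \cdot O\!\left(\frac{2^{n-d}}{\sqrt{n}}\right) \cdot 2^{-(n-d)} = O(n^{d-1/2}),
\]
with the implicit constant depending only on $d$, as claimed.

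I do not anticipate a real obstacle: no property of $P$ beyond Borel measurability and hyperplanes having measure zero is used, and both of these appear already in the derivation of \eqref{eq:BaranySteiger}. The only mild care needed is the parity of $n-d$, which is cosmetic, and noting that the bound on $(t(1-t))^{(n-d)/2}$ is the uniform maximum, making the use of $G_P$ as a probability measure legitimate regardless of its shape. This simplicity is precisely what gives the theorem its universality.
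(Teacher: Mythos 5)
Your proposal is correct and follows essentially the same route as the paper's proof: specialize \cref{eq:BaranySteiger} to $k=\frac{n-d}{2}$, bound the integrand uniformly by $2^{-(n-d)}$ using $t(1-t)\le 1/4$, and conclude via Stirling's estimate for the central binomial coefficient. The paper's version is just slightly more terse, but the decomposition, the pointwise bound on the integrand, and the source of the $1/\sqrt{n}$ saving are identical.
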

\begin{proof}
From \cref{eq:BaranySteiger} and Stirling's approximation,
\begin{equation}\label{eq:generalbound}
\begin{split}
E_P \left(\frac{n-d}{2},n \right) &= 2\binom{n}{d}\binom{n-d}{\frac{n-d}{2}} \int_0^1 t^{\frac{n-d}{2}}(1-t)^{\frac{n-d}{2}} \ud G_P(t)\\
& \le 2\binom{n}{d}\binom{n-d}{\frac{n-d}{2}} \frac{1}{2^{n-d}} \\
& = O(n^{d-1/2}).
\end{split}
\end{equation}
(The above inequality holds because $t^a (1-t)^b$ is maximized when $t=a/(a+b)$ if $a,b \neq 0$.)
\end{proof}
In \cref{sec:2} we use the weak bound in the proof of \cref{thm:basic} combined with a partition of the plane to show an improved bound for the expected number of $k$-edges:

\begin{theorem}\label{thm:5/4}
Let $P$ be a Borel probability distribution on $\Rl^2$ such that the measure of every line is zero. Then $E_P(k,n) \le 10n(k+1)^{1/4}$. 
\end{theorem}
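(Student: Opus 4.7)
The plan is to combine the weak bound used in the proof of \cref{thm:basic} with a partition of the plane into equal-measure vertical strips. Choose $m\asymp n/(k+1)^{1/4}$; since every line has $P$-measure zero, we may partition $\mathbb{R}^2$ into $m$ vertical strips $V_1,\dotsc,V_m$ with $P(V_i)=1/m$ up to a $P$-null set of boundary lines. For each $i$, the restriction $X\cap V_i$ is an iid sample of size $N_i\sim\mathrm{Bin}(n,1/m)$ from the conditional Borel measure $P_i:=mP(\,\cdot\cap V_i)$, which itself assigns measure zero to every line, so \cref{thm:basic} applies to $P_i$.

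Next, I would split the $k$-edges of $X$ into two groups, depending on whether their two endpoints lie in the same vertical strip or in distinct strips. For the intra-strip group, the crude bound by the number of pairs inside a strip gives
\[
\mathbb{E}\Bigl[\#\{\text{intra-strip }k\text{-edges}\}\Bigr]\le\mathbb{E}\Bigl[\sum_{i=1}^m\binom{N_i}{2}\Bigr]=\frac{n(n-1)}{2m},
\]
which for the chosen value of $m$ is of order $n(k+1)^{1/4}$. For the cross-strip group, the plan is to apply the same maximization inequality $t^a(1-t)^b\le a^ab^b/(a+b)^{a+b}$ that underlies the proof of \cref{thm:basic}, but now locally inside each strip via the integral formula \eqref{eq:BaranySteiger} applied to $P_i$ with sample size $N_i$. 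Summing over strips gives a contribution of order $\sum_i\mathbb{E}[N_i^{3/2}]\lesssim m\cdot(n/m)^{3/2}=n^{3/2}/\sqrt{m}=n(k+1)^{1/8}$, which is dominated by the intra-strip estimate.

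The main obstacle is the cross-strip group: the condition ``exactly $k$ points of $X$ above the line'' is a global constraint that couples all strips together, whereas \cref{thm:basic} only sees the strip-local median. The key step is therefore to translate this global condition into a local, median-type condition inside each $V_i$ so that the weak bound of \cref{thm:basic} can be invoked at the strip level. Once this translation is in place, choosing $m=\lceil n/(k+1)^{1/4}\rceil$ optimally balances the intra- and cross-strip contributions, and careful bookkeeping of the multiplicative constants yields the explicit estimate $E_P(k,n)\le 10n(k+1)^{1/4}$.
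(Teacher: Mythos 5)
There is a genuine gap, and it is exactly where you flagged it: the cross-strip edges. Your plan is to handle them by applying the \cref{thm:basic}-type integrand maximization ``locally'' in each strip, hoping for a bound of order $\sum_i \mathbb{E}[N_i^{3/2}] \approx n^{3/2}/\sqrt{m}$. But no local translation of the global condition exists that yields this: a cross-strip $k$-edge is not an edge of any single strip's point set, and the number of points of $X\cap V_i$ below the line through a cross-strip pair has no relation to the strip-local median. More fundamentally, the probabilistic mechanism cannot gain any factor of $m$ for cross-strip pairs. The whole point of the partition in this style of argument is that the event ``$X_i,X_j$ lie in the same cell'' has probability $\le 1/m$, and that factor multiplies the Bárány--Steiger bound $\mathbb{P}((X_i,X_j)\text{ is a }k\text{-edge})\le \sqrt{n-2}/(\sqrt{k}\sqrt{n-2-k})$. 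For cross-strip pairs the analogous event has probability close to $1$, so summing the pairwise bound over all $\binom{n}{2}$ cross-strip pairs just reproduces the $O(n^{3/2})$ of \cref{thm:basic} with no improvement; your claimed $n^{3/2}/\sqrt{m}$ is off by a factor of $\sqrt{m}$ with no mechanism to recover it.

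The missing idea is geometric, not probabilistic: the convex/concave chain decomposition (\cref{lem:convexchains}). After reducing to the oriented graph $G_k$ (and handling the opposite orientation by a $180^\circ$ rotation, at the cost of a factor $2$), $G_k$ decomposes into $\min(k+1,n-k-1)\le k+1$ convex chains, each of which is $x$-monotone and hence meets each vertical partition line at most once. This gives a \emph{deterministic} bound of $m(k+1)$ on the number of cross-strip edges of $G_k$. Note that this forces a different balancing from yours: with $m\asymp n/(k+1)^{1/4}$ the chain bound gives $m(k+1)\asymp n(k+1)^{3/4}$, which is too large, so the paper takes $m=\lfloor n/k^{3/4}\rfloor$. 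That smaller $m$ in turn makes your crude intra-strip count $n^2/(2m)\asymp nk^{3/4}$ too large, so the intra-strip part must be bounded by the sharper conditional Bárány--Steiger argument, $\frac{n^2}{m}\cdot\frac{\sqrt{n-2}}{\sqrt{k}\sqrt{n-2-k}}$, rather than by counting all pairs in a cell. In short, you have assigned the probabilistic estimate and the crude count to the wrong halves of the decomposition; both the chain lemma (for cross-strip) and the conditional integral bound (for intra-strip) are needed, and neither can be replaced by the tool you propose in its place.
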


The proof of \cref{thm:5/4} is in \cref{sec:pflines}. In \cref{sec:convex/concave}, we review the definition of the $k$-edge graph (\cref{def:graph}) and state some results needed for the proof of \cref{thm:5/4}. In particular, we explain how the $k$-edge graph can be decomposed into \emph{convex chains}. The proof of \cref{thm:5/4} also uses a divide and conquer approach where the plane is partitioned into cells by vertical lines. 

\cref{sec:3} outlines how it may be possible to improve the bound in \cref{thm:5/4} by partitioning the plane with algebraic curves rather than vertical lines. 
The existence of algebraic curves which partition the plane in a useful way is a consequence of the continuous version of the polynomial partitioning theorem of \cite{Guth} (\cref{thm:non-singularpartition}).
Whether or not using algebraic curves rather than lines leads to an improvement in the bound depends on a question we leave open (\cref{question}) which asks for a bound on the maximum number of times that an algebraic curve of degree $r$ can intersect the $k$-edge graph of a set of $n$ points. 
We show that this quantity is $O(nr^2)$ but, as far as the authors know, it may be possible to improve this bound, see \cref{sec:conclusion}.

\subsection{Tightness of the argument in \texorpdfstring{\cref{thm:basic}}{Theorem \ref{thm:basic}}: Translations of a fixed shape}

In \cref{sec:tight}, we show that the two-step argument in the proof of \cref{thm:basic} (\cref{eq:BaranySteiger} and the general upper bound of the integrand in \cref{eq:generalbound}) is not as loose as it seems, if one applies it to the $k$-set problem on a set system other than half-planes (generalized in a natural way).
More precisely, let $C \subseteq \RR^2$ be the interior of a fixed convex body. 
We consider the set system of translations of $C$ and study the expected number of ways in which one can enclose $k$ points out of a given finite set of points using translations of $C$.
Our variations of $k$-sets and $k$-edges for this problem are called $T_C$-$k$-sets and $T_C$-$k$-edges (\cref{def:T_C}).
In addition, we show some deterministic bounds to put our probabilistic bounds in context.

For the case where $C$ is strictly convex, we show:
\begin{itemize}
\item A relation between $T_C$-$k$-sets and $T_C$-$k$-edges that allows one to derive upper bounds on the number of $T_C$-$k$-sets from upper bounds on the number of  $T_C$-$k$-edges (\cref{lem:shp}).
\item For certain natural distributions, the expected number of $T_C$-$k$-sets and $T_C$-$k$-edges for a random set of $n$ points and some $k$ proportional to $n$ is $\tilde\Theta(n^{3/2})$ (\cref{thm:halvingaverage,thm:vcavg}).
The upper bound uses the B\'{a}r\'{a}ny and Steiger technique, while the lower bound uses the uniform convergence theorem of Vapnik and Chervonenkis \cite{VapChe71}.
\item The growth function is $O(n^2)$ (\cref{cor:growth}).
\end{itemize}
For the case where $C$ has $C^2$ boundary (\cref{def:C2}), we show that the maximum number of $T_C$-$k$--sets of $n$ points with $k$ proportional to $n$ is $\Omega(n^2)$ (\cref{thm:c2cross}).

Some of the assumptions above are chosen for readability, the actual theorems have weaker assumptions in some cases.

\section{Bounding the expected number of \texorpdfstring{$k$}{k}-edges}\label{sec:2}
In this section we study the original probabilistic $k$-set/$k$-facet problem in the plane and prove \cref{thm:5/4}.

\subsection{Convex/concave chains}\label{sec:convex/concave}

Here we recall the \say{convex chains} technique of \cite{MR1608872} which was used in \cite{Dey1997ImprovedBO} to establish the $O(n(k+1)^{1/3})$ bound for planar $k$-edges. First we need to define the \emph{$k$-edge graph}.

Let $S$ be a set of $n$ points in general position in the plane and choose some $(x,y)$ coordinate system. With this choice, we assume without loss of generality that no line spanned by two points in $S$ is vertical. Let $E_k$ be the set of line segments connecting two points $x,y \in S$ such that there are exactly $k$ points from $S$ in the (open) halfplane below $\aff(x,y)$. Therefore, $E_k$ is a subset of the set of all $k$-edges of $S$. 
The line segments $E_k$ define the $k$-edge graph:
\begin{definition}\label{def:graph}
Let $S$ be a set of $n$ points in general position in the plane and assume that no line spanned by two points in $S$ is vertical. For any $0 \le k \le n-2$, the (geometric) graph $G_k = (S,E_k)$ is called the \emph{$k$-edge graph} of $S$. 
\end{definition}

The convex chains technique decomposes the $k$-edge graph $G_k$ of a point set $S$ into the union of a bounded number of \emph{convex chains}. 
Each convex chain is the graph of a convex piece-wise linear function defined on some interval of the $x$-axis. 
Each chain is formed by some subset of the $k$-edges in $G_k= (S,E_k)$.  
A simpler version of the proof of the $O(n(k+1)^{1/3})$ bound was established in \cite{Sariel} by observing that the $k$-edge graph can simultaneously be decomposed into the union of \emph{concave chains}. 


\begin{lemma}[{Convex/concave chains \cite{MR1608872}, \cite[Lemma 9.10]{Sariel}\footnote{\cite[Lemma 9.10]{Sariel} states that the number of convex chains is $k-1$ and the number of concave chains is $n-k+1$. The reason this doesn't match the Lemma as we have stated it is that \cite{Sariel} defines a $k$-edge to be line segment connecting two points $x,y$ such that there are $k$ points in the \emph{closed} halfspace below $\aff(x,y)$ whereas we require that there are $k$ points in the \emph{open} halfspace below $\aff(x,y)$. We choose open because it matches the standard definition of a $k$-facet.} }]\label{lem:convexchains}
Let $G_k= (S,E_k)$ be the $k$-edge graph of a set $S$ of $n$ points in the plane.
The graph $G_k$ can be decomposed into the union of $k+1$ (piece-wise linear) convex chains. 
Similarly, the graph can be decomposed into the union of $n-k-1$ (piece-wise linear) concave chains. 
\end{lemma}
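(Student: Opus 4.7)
The plan is to construct the convex chain decomposition explicitly via a local pairing of $k$-edges at each vertex of $G_k$, then obtain the count by a global identity. Processing the vertices in order of $x$-coordinate, at each $v \in S$ I would separate the $k$-edges incident to $v$ into \emph{left-edges} (whose other endpoint has smaller $x$-coordinate) and \emph{right-edges} (whose other endpoint has larger $x$-coordinate), sorting each list by slope. The chains are built by matching left-edges with right-edges at $v$ in an order-preserving fashion, so that each matched right-edge has strictly larger slope than its partner left-edge; this guarantees that the resulting path is $x$-monotone with strictly increasing slopes (hence convex) at every internal vertex. Unmatched left-edges terminate a chain, while unmatched right-edges start a new one, and the number of convex chains then equals $\sum_{v \in S} \max\bigl(0, d^+(v) - d^-(v)\bigr)$ where $d^\pm(v)$ counts right/left $k$-edges at $v$.

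The justification that the slope-ordered matching always produces strictly increasing slopes is a local calculation: rotate a directed line through $v$ and track the number of points of $S$ strictly below it. The angular positions at which this count equals $k$ determine the slopes of the $k$-edges at $v$, and an analysis of these positions shows that the left- and right-edge slopes interleave in the correct way for the greedy matching to succeed. This step is essentially mechanical once the right rotational invariant is in hand.

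The main obstacle is the counting step, i.e., showing that the number of convex chains is exactly $k+1$ (and dually $n-k-1$ for concave chains). The most direct route I would try is the dual line arrangement: the $k$-edges of $S$ correspond bijectively to the vertices of the $k$-level of the dual arrangement, which is an $x$-monotone polygonal curve. Under this correspondence, a maximal convex chain in the primal corresponds to a maximal arc of the $k$-level whose dual slopes are monotone in one direction, and counting these maximal monotone arcs (e.g., via the alternation between \emph{concave} and \emph{convex} vertices of the $k$-level) should yield $k+1$, with the other sense of monotonicity contributing $n-k-1$. The concave chain statement is obtained by the symmetric argument, either by pairing in the opposite slope order, or by reflecting coordinates and working with $(n-k-2)$-edges. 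Verifying the exact count $k+1$ cleanly, without slack, is the delicate part of the proof and is where most of the work would go.
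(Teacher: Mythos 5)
First, a remark on the comparison you were asked for: the paper does not prove this lemma at all — it quotes it from the cited sources, adding only a footnote that reconciles the open/closed halfplane conventions. So your attempt has to stand on its own. Your overall strategy (pair left and right $k$-edges locally at each vertex, justify the local structure by rotating a line through the vertex and tracking the number of points strictly below it) is indeed the standard route, but the counting step as you state it is not merely ``delicate'' — it is wrong. The formula $\sum_{v}\max\bigl(0,d^+(v)-d^-(v)\bigr)$ depends only on the degrees, not on which matching you use, so it would assign the \emph{same} number of chains to the convex and to the concave decomposition; it therefore cannot produce the asymmetric counts $k+1$ and $n-k-1$. In fact it evaluates to $\min(k+1,\,n-k-1)$, because $d^+(v)-d^-(v)$ is the net number of crossings of level $k$ by the rotating count, which is $+1$, $0$ or $-1$ according to how many points of $S$ lie to the left and to the right of $v$. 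The hidden false step is the assertion that an order-preserving matching always pairs $\min\bigl(d^+(v),d^-(v)\bigr)$ left--right pairs while keeping each matched right edge's slope strictly larger. Take the triangle $p_1=(0,0)$, $p_2=(1,1)$, $p_3=(2,0)$ with $k=1$: the two $1$-edges are $p_1p_2$ (slope $1$) and $p_2p_3$ (slope $-1$), so at $p_2$ the unique left edge has \emph{larger} slope than the unique right edge and the two cannot be concatenated into a convex chain. Two convex chains are needed (which is $k+1$), while your formula returns $1$.

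The rotating-line analysis you allude to actually gives you the fact that repairs this, and it makes the dual $k$-level detour unnecessary. At each vertex $v$, as the slope of a line through $v$ increases, the number of points strictly below it steps down from $k+1$ to $k$ exactly at the slopes of the left $k$-edges at $v$ and steps up from $k$ to $k+1$ exactly at the slopes of the right $k$-edges at $v$; between two consecutive up-steps through that boundary there must be a down-step, so in slope order the left and right $k$-edges at $v$ strictly \emph{alternate}. The convexity-respecting pairing must therefore match each left edge to the immediately following right edge in this alternating sequence, and a new convex chain starts at $v$ precisely when the smallest-slope $k$-edge at $v$ is a right edge. In that case the count stays at most $k$ for all smaller slopes, hence in the vertical limit $v$ has at most $k$ points of $S$ strictly to its left; only the $k+1$ leftmost points can satisfy this, and at most one chain starts per vertex, which gives the bound of $k+1$ convex chains directly. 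The $n-k-1$ bound for concave chains is the mirror statement at the largest slope. As written, your proposal leaves the count at ``should yield $k+1$'' via maximal monotone arcs of the dual level, and the explicit formula you do commit to contradicts the statement you are trying to prove.
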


\subsection{Proof of \texorpdfstring{\cref{thm:5/4}}{Theorem \ref{thm:5/4}} }\label{sec:pflines}
We are now ready to prove our bound on the expected number of $k$-edges. 
The idea of the proof is to use vertical lines to divide the plane into a number of regions of equal probability.
We then bound separately the expected number of $k$-edges that intersect one of the vertical lines and the expected number of $k$-edges that do not intersect any of the lines. We remark that a partition of the plane using vertical lines was also used by Lov\'{a}sz in \cite{lovasz1971number} to establish the $O(n^{3/2})$ bound for the deterministic $k$-set problem.

\begin{proof}[Proof of \cref{thm:5/4}]
Because of the symmetry of the $k$-edge problem, we can assume without loss of generality that $k \le (n-2)/2$. The conclusion is clearly true when $k=0$ so assume also that $k \ge 1$. 

We first bound the expected number of $k$-edges \emph{that are in $G_k$}. At the end of the proof we explain how this leads to a bound for the expected number of $k$-edges.

For any fixed $n$, let $m:=m(n)$ be an integer whose value will be chosen later and let $L$ be the set consisting of $m$ vertical lines which partition $\Rl^2 \setminus \big( \bigcup_{\ell \in L} \ell\big)$ into $m+1$ open cells such that the measure according to $P$ of each cell is equal to $1/(m+1)$. 
Such a set of lines exists because the measure (according to $P$) of every line is 0.\details{In order to show how to construct this set of lines, it suffices to show that given any finite measure $\mu$ on $\Rl^2$ such that the measure of every line is 0, there exists a vertical line which divides the measure into two equal parts. Indeed, the function $f$ defined by $f(t) = \mu((x,y) \in \Rl^2 \suchthat x \le t)$ is a continuous function of $t$. The fact that $f$ is continuous is a consequence of the assumption that the measure of every vertical line is 0. Furthermore, as $t \to -\infty$, $f(t) \to 0$ and as $t \to \infty$, $f(t) \to \mu(\Rl^2)$. Therefore, the claim follows from the intermediate value theorem.} 

Let $X = \{X_1, \dotsc, X_n\}$ be a sample of $n$ iid points from $P$. 
Observe that the probability that two points in $X$ span a vertical line is 0 so we do not need to consider this case.
Also, since the measure of every line is 0, $X$ is in general position with probability 1 so we can also ignore the case when $X$ is not in general position. Therefore, we can analyze the $k$-edges of $X$ using the $k$-edge graph $G_k$ of $X$.

We bound the expected number of $k$-edges in $G_k$ by considering two different types of $k$-edges separately. First we bound the expected number of $k$-edges formed by two points in different cells of the partition. Then we bound the expected number of $k$-edges formed by two points in the same cell of the partition. That is, the expected number of $k$-edges in $G_k$ is equal to
\begin{equation}\label{eq:decomp}
\begin{split}
& \mathbb{E}(\text{number of } k\text{-edges in }G_k \text{ formed by two points in different cells}) \\ 
&+\mathbb{E}(\text{number of } k\text{-edges in }G_k \text{ formed by two points in the same cell}).
\end{split}
\end{equation}
If $\conv(X_1,X_2)$ is a $k$-edge formed by two points $X_1,X_2$ in different cells, then $\conv(X_1,X_2)$ intersects at least one line in $L$. 
So to bound the expected number of $k$-edges in $G_k$ formed by two points in different cells, it suffices to bound the expected number of $k$-edges in $G_k$ that intersect a line in $L$. By \cref{lem:convexchains}, each line in $L$ intersects at most $\min(k+1,n-k-1) = k+1$ $k$-edges in $G_k$. Therefore, the first term in \cref{eq:decomp} is at most $m(k+1)$.

Now we bound the second term. Recall that the measure according to $P$ of each cell is equal to $1/(m+1)$. Therefore, for any fixed $i \neq j$, the probability that $X_i$ and $X_j$  are in the same cell is $1/(m+1) \le 1/m$. 

We can bound the second term in \cref{eq:decomp} by
\begin{equation}\label{eq:cond}
\begin{split}
&\mathbb{E}(\text{number of } k\text{-edges in }G_k \text{ formed by two points in the same cell}) \\
&\le\sum_{(X_i,X_j) \in \binom{X}{2}} \mathbb{P} \bigl( (X_i,X_j) \text{ is a }k\text{-edge AND } X_i,X_j \text{ are in the same cell} \bigr)\\ 
&= \sum_{(X_i,X_j) \in \binom{X}{2}} \mathbb{P} \bigl((X_i,X_j) \text{ is a }k\text{-edge } \bigm| X_i,X_j \text{ are in the same cell} \bigr) \\&\hspace{1.5cm} \cdot \mathbb{P}(X_i,X_j \text{ are in the same cell})\\
&= \binom{n}{2}\cdot \mathbb{P}\bigl((X_1,X_2) \text{ is a }k\text{-edge } \bigm| X_1 ,X_2 \text{ are in the same cell} \bigr) \\&\hspace{1.5cm}\cdot \mathbb{P}(X_1 ,X_2 \text{ are in the same cell}) \\
&\le \frac{n^2}{m}\cdot \mathbb{P} \bigl((X_1,X_2) \text{ is a }k\text{-edge } \bigm| X_1,X_2 \text{ are in the same cell} \bigr).
\end{split}
\end{equation}
Set $T:=P\bigl(\aff(X_1,X_2)^+\bigr)$ and $G_P(t) = \mathbb{P}(T\le t \giventhat X_1,X_2 \text{ are in the same cell})$.  
We then have that%
\footnote{We use the following version of the law of total probability:\\ $\mathbb{P}(A \giventhat Y) = \mathbb{E}\bigl(\mathbb{P}(A\giventhat X,Y) \giventhat Y\bigr)$. This follows from \cite[Theorem 4.1.13(ii)]{MR3930614}.
}
\begin{equation}\label{eq:int}
\begin{split}
&\mathbb{P} \bigl(\text{$(X_1,X_2)$ is a $k$-edge} \bigm| X_1,X_2 \text{ are in the same cell} \bigr)\\
&= \mathbb{E}\Bigl( \mathbb{P} \bigl(\text{$(X_1,X_2)$ is a $k$-edge} \bigm| X_1,X_2,(X_1,X_2 \text{ are in the same cell}) \bigr) \\& \hspace{1.5cm}\Bigm| X_1,X_2 \text{ are in the same cell} \Bigr)\\
&= 2\binom{n-2}{k} \mathbb{E}\bigl( T^k(1-T)^{n-2-k} \bigm| X_1,X_2 \text{ are in the same cell} \bigr)\\
&= 2\binom{n-2}{k} \int_0^1 t^k(1-t)^{n-2-k} \ud G_P(t)\\
& \le 2\binom{n-2}{k}\cdot \bigg( \frac{k}{n-2} \bigg)^k \cdot  \bigg(\frac{n-2-k}{n-2} \bigg)^{n-2-k}\\
& \le \frac{\sqrt{n-2}}{\sqrt{k}\sqrt{n-2-k}}.
\end{split}
\end{equation}
The second to last inequality holds because $t^a (1-t)^b$ is maximized when $t=a/(a+b)$ if $a,b \neq 0$. And the last inequality follows from Stirling-type upper and lower bounds for factorials, see for example \cite{Stirling}. Therefore, we have that \cref{eq:decomp} is at most
\[
m(k+1)+\frac{n^2}{m}\frac{\sqrt{n-2}}{\sqrt{k}\sqrt{n-2-k}}
\]
and choosing $m =\big\lfloor \frac{n}{k^{3/4}}\big\rfloor$ makes the above quantity less than $2(\sqrt{2}+1)n(k+1)^{1/4}$. 


Observe that the number of $k$-edges of a point set $S$ is at most equal to the number of edges in the $k$-edge graph of $S$ plus the number of edges in the $k$-edge graph of the point set $S'$ where $S'$ is the point set obtained by rotating $S$ 180 degrees\footnote{When $k=\frac{n-2}{2}$ this counts each $k$-edge twice, so the constant in our bound can be improved in this case.}. Therefore, we can  rotate the plane 180 degrees and apply the same analysis to again bound the expected number of $k$-edges in~$G_k$. From this, we get that expected number of $k$-edges is at most two times the bound we obtain for the expected number of $k$-edges in $G_k$, which is less than $10n(k+1)^{1/4}$.
\end{proof}

\section{On the number of \texorpdfstring{$k$}{k}-edges via the polynomial method}\label{sec:3}

In this section we give another proof of \cref{thm:5/4} in the case when $k$ is proportional to $n$.
The new proof partitions the plane using algebraic curves instead of vertical lines. 
Recall that an algebraic curve in $\Rl^2$ is the set of zeroes of a polynomial equation in two variables, i.e., for a  polynomial $f \in \Rl[x_1, x_2]$, the \emph{algebraic curve} defined by $f$ is the set 
\[
    Z(f):= \{(x_1,x_2) \in \Rl^2  \suchthat f(x_1,x_2) = 0\}.
\] 

Given a distribution on $\Rl^2$ which has a density, we use the continuous polynomial partitioning theorem of \cite{Guth} (\cref{thm:non-singularpartition}) to obtain an algebraic curve which divides the plane into a number of \emph{cells} of equal probability. 
The rest of the proof is nearly the same as the proof in \cref{sec:pflines}. 
The reason this alternative proof is interesting is because it motivates the following open question which, if resolved, may lead to an improvement to the bound in \cref{thm:5/4}.

\begin{question}\label{question}
What is the maximum (finite) number of times that an irreducible non-singular\footnote{One could consider the same question for possibly singular curves, but, for our purposes, it suffices to consider non-singular curves.} degree $r$ algebraic curve can intersect the $k$-edge graph of a set of $n$ points in the plane?
\end{question}

It is clear that the quantity in \cref{question} is $\Omega(nr)$, and we have some reason to believe that it may be $\Theta(nr)$. The best bound we are able to prove is $O(nr^2)$ (\cref{lem:intersections}). 
This bound is good enough to reprove \cref{thm:5/4} using polynomial partitioning in the case where the distribution has a density (\cref{thm:5/4poly}). 
Any improvement to our $O(nr^2)$ bound would lead to an improvement in the bound in \cref{thm:5/4poly}: When $k$ is proportional to $n$, the bound in \cref{thm:5/4poly} is $O(n^{5/4})$. 
An $O(nr)$ bound on the quantity in \cref{question} would allow one to improve the bound in \cref{thm:5/4poly} from $O(n^{5/4})$ to $O(n^{7/6})$ when $k$ is proportional to $n$. The proof of the $O(n^{7/6})$ bound would be exactly the same as the proof of \cref{thm:5/4poly} except that the choice of the degree $r$ of the partitioning polynomial would be different. 

An $O(nr)$ bound on the quantity in \cref{question} would also have an interesting application to the deterministic $k$-set problem: It would give another proof of Dey's $O(n(k+1)^{1/3})$ bound \cite{Dey1997ImprovedBO,DBLP:journals/dcg/Dey98} on the maximum number of $k$-edges of a set of $n$ points in the plane in the case where $k$ is proportional to $n$. 
The idea of the proof is as follows. 
Given any set $S$ of $n$ points in general position in the plane, for some $r$ to be chosen later, use the discrete polynomial partitioning theorem (\cref{thm:discretepartition}) to find a degree $O(r)$ polynomial $f$ such that $\Rl^2 \setminus Z(f)$ is the union of $r^2$ pairwise disjoint open sets (called cells) each of which contains at most $n/r^2$ points of $S$. 
The $O(nr)$ bound on the quantity in \cref{question} implies that the number of $k$-edges formed by two points of $S$ which are both in different cells of the partition is $O(nr)$. 
Also, the number of $k$-edges formed by two points which are both in the same cell is at most $r^2 \cdot  \binom{n/r^2}{2} = O(\frac{n^2}{r^2})$.
Observe that we can apply a sufficiently small perturbation to the points of $S$ to obtain a point set which is in general position with respect to degree $O(r)$ algebraic curves and which has the same number of $k$-edges as $S$. Therefore, we can assume without loss of generality that $S$ is in general position with respect to degree $O(r)$ algebraic curves. This means that the number of points contained in $Z(f)$ is $O(r^2)$ and so the number of $k$-edges formed by two points both of which are in $Z(f)$ is $O(r^4)$. 
Finally, it is not hard to show that the number of $k$-edges formed by two points where one point is in $Z(f)$ and the other is not and the interior of the $k$-edge does not intersect $Z(f)$ is $O(n)$.\details{the only way this can happen if one point is in $Z(f)$ and the other is in one of the two cells which have the part of $Z(f)$ that contains the first point on their boundary. 
So there is a $O(r^2) \cdot 2 \cdot n/r^2$} 
Moreover, the number of $k$-edges formed by two points where one point is in $Z(f)$ and the other is not and the interior of the $k$-edge \emph{does} intersect $Z(f)$ is $O(nr)$, again by the $O(nr)$ bound on \cref{question}. 
Choosing $r = \Theta(n^{1/3})$ shows that the total number of $k$-edges is $O(n^{4/3})$. 
See \cref{sec:conclusion} for a discussion of why we believe the quantity in \cref{question} may be $\Theta(nr)$.

There are two technical issues introduced by our application of the polynomial partitioning theorem (\cref{thm:non-singularpartition}) to the probabilistic version of the $k$-edge problem. First, \cref{thm:non-singularpartition} can only be applied to distributions which have a density. The reason for this is that there is no known version of \cref{thm:non-singularpartition} which applies to distributions which do not have a density.\footnote{The proof of \cref{thm:non-singularpartition} relies on the Stone-Tukey ham sandwich theorem \cite{StoneTukey} for $L^1$ functions on $\Rl^d$. There is a version of the ham sandwich theorem which applies to more general distributions but it has a weaker conclusion and cannot be used to extend \cref{thm:non-singularpartition} to more general distributions as far as the authors know.}
Secondly, because we partition the plane with an arbitrary algebraic curve, in order to bound the number of times the $k$-edge graph intersects the boundary of the partition, we must use the convex and concave chain decompositions of the $k$-edge graph simultaneously. Because of this, the bound we prove in this section only matches the bound in \cref{thm:5/4} in the case when $k= \lfloor cn \rfloor $ for some $c \in (0,1)$.

Because of these technical issues, the theorem we prove in this section is weaker than \cref{thm:5/4}:
\begin{theorem}\label{thm:5/4poly}
Let $P$ be a probability distribution on $\Rl^2$ which has a density.
Then $E_P(k,n) \le  \frac{58n^{7/4}}{(k+1)^{1/4}(n-2-k)^{1/4}}$.
\end{theorem}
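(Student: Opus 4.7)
The plan is to follow the outline sketched in the paragraphs preceding the theorem: partition the plane using an algebraic curve obtained from continuous polynomial partitioning (instead of the vertical lines used in \cref{thm:5/4}), and then bound separately the expected number of $k$-edges crossing the curve and those having both endpoints in a single cell of the partition. The $O(r^2)$ cells available from a degree-$r$ curve (versus only $m+1$ from $m$ lines) is precisely what yields the improvement for $k$ proportional to $n$.

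Let $r \ge 1$ be an integer to be optimized at the end. First I would apply the continuous polynomial partitioning theorem \cref{thm:non-singularpartition} to the density of $P$ to produce a polynomial $f \in \Rl[x_1,x_2]$ of degree $O(r)$ whose zero set $Z(f)$ partitions $\Rl^2$ into $O(r^2)$ open cells, each of $P$-measure $O(1/r^2)$. Let $X = \{X_1,\dotsc,X_n\}$ be iid from $P$. Since $P$ has a density and $Z(f)$ has Lebesgue measure zero, with probability $1$ the sample is in general position and avoids $Z(f)$, so we may use the $k$-edge graph $G_k$ as in \cref{sec:convex/concave}; as in the proof of \cref{thm:5/4}, it suffices to bound $\mathbb{E}|G_k|$ and double at the end to pick up the other orientation.

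Next, split the edges of $G_k$ into those that intersect $Z(f)$ (equivalently, have endpoints in different cells) and those with both endpoints in a common cell. For the crossing contribution, \cref{lem:intersections} gives a deterministic bound of $O(nr^2)$ on the number of intersections of $Z(f)$ with the $k$-edge graph of $X$; this in particular caps the number of $k$-edges cut by $Z(f)$. If $Z(f)$ is reducible or singular, I would apply the lemma componentwise to the irreducible non-singular pieces, whose degrees sum to at most $r$, and use $\sum_i r_i^2 \le r^2$. For the within-cell contribution, conditioning on a pair $X_i, X_j$ lying in a fixed cell $c$ of $P$-measure $\mu_c$, the argument from \cref{eq:cond,eq:int} in the proof of \cref{thm:5/4} applies verbatim (with the cell $c$ playing the role of a vertical slab) to give $\mathbb{P}\bigl((X_i,X_j) \text{ is a $k$-edge} \,\big|\, X_i,X_j \in c\bigr) \le \sqrt{n-2}/\bigl(\sqrt{k+1}\sqrt{n-2-k}\bigr)$ up to constants, via \cref{eq:BaranySteiger} and Stirling. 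Multiplying by $\binom{n}{2}\mu_c^2$ and summing over cells, $\sum_c \mu_c^2 \le (\max_c \mu_c)\sum_c \mu_c = O(1/r^2)$, so the total within-cell contribution is $O\bigl(n^{5/2}/\bigl(r^2\sqrt{(k+1)(n-2-k)}\bigr)\bigr)$.

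Combining, $\mathbb{E}|G_k| = O(nr^2) + O\bigl(n^{5/2}/(r^2\sqrt{(k+1)(n-2-k)})\bigr)$, and balancing the two terms by choosing $r^2 \approx n^{3/4}/\bigl((k+1)(n-2-k)\bigr)^{1/4}$ (and rounding to an integer) produces an overall bound of $O\bigl(n^{7/4}/((k+1)(n-2-k))^{1/4}\bigr)$. Doubling for the orientation and carefully tracking constants through the Stirling estimate and the partitioning theorem should produce the explicit bound $58 n^{7/4}/\bigl((k+1)^{1/4}(n-2-k)^{1/4}\bigr)$. The main obstacle is the bookkeeping around $Z(f)$: verifying that \cref{lem:intersections} can be invoked in the form stated (which assumes an irreducible non-singular curve) by reducing to the irreducible non-singular components of $Z(f)$, and handling the boundary cases $k=0$ or $k=n-2$ where the integrand degenerates—these small-$k$ cases can be dealt with separately since the bound in the theorem becomes vacuous or can be replaced by the elementary estimate $E_P(k,n) \le \binom{n}{2}$.
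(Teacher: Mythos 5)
Your proposal is correct and follows essentially the same route as the paper's proof: non-singular continuous polynomial partitioning applied to the density, the $O(nr^2)$ crossing bound from \cref{lem:intersections} (which, as stated in the paper, already covers reducible $Z(f)$ with non-singular components via exactly the componentwise reduction you describe), the conditional Stirling estimate for within-cell pairs, the choice $r^2 \approx n^{3/4}/\bigl((k+1)(n-2-k)\bigr)^{1/4}$, and doubling for the opposite orientation. The only cosmetic difference is that you bound the same-cell probability by summing $\mu_c^2$ over cells rather than using the paper's direct ``within a factor of 2'' estimate of $4/r^2$; both yield the same $O(1/r^2)$.
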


Before proving \cref{thm:5/4poly}, in \cref{sec:partition} we review the continuous polynomial partitioning theorem. 
\cref{sec:curves} establishes some necessary lemmas concerning algebraic curves.

We make one remark on the requirement that the probability distribution $P$ in \cref{thm:5/4poly} has a density. 
As mentioned earlier, in \cite{BaranySteiger}, B\'{a}r\'{a}ny and Steiger construct a probability distribution $P$ with $E_P(\frac{n-2}{2},n) = \Omega(n\log n)$. 
The distribution $P$ does not have a density. 
However, if $m_i$ is any decreasing sequence whose limit is zero, B\'{a}r\'{a}ny and Steiger also show how to construct a distribution $P'$ which has a density and with $E_{P'}(\frac{n-2}{2},n) = \Omega(m_nn\log n)$.
In particular, $E_{P'}(\frac{n-2}{2},n)$ can still be super-linear even if $P'$ has a density. 
This shows that the class of distributions which have a density is an important class of distributions to investigate in the context of the probabilistic $k$-facet problem. 
\details{Although the details haven't been worked out as far as we know, it is also probably not hard to construct a distribution $P$ which has a density and with $E_P(k,n) = \Omega(m_nne^{\Omega(\sqrt{\log n})})$}

\subsection{Polynomial partitioning}\label{sec:partition}
The polynomial partitioning theorem of \cite{GuthKatz} has recently been used to solve a number of problems in discrete and combinatorial geometry \cite{Guthbook}. It has also been used to give alternative proofs of some known results, see \cite{MR2957631}. Perhaps the most commonly used version of the polynomial partitioning theorem is the following theorem, which we refer to as the discrete version. 

\begin{theorem}[Discrete polynomial partitioning \cite{GuthKatz}]\label{thm:discretepartition}
Let $S\subset \Rl^d$ be a set of $n$ points. Then for each $r \le n$ there is a non-zero polynomial $f \in \Rl[x_1, \dotsc, x_d]$ of degree $O(r)$ (where the constants in big-$O$ depend only on $d$) such that $\Rl^d \setminus Z(f)$ is the union of a family $\mathcal{O}$ of $r^d$ pairwise disjoint open sets such that each $O\in \mathcal{O}$ contains at most $n/r^d$ points of $S$.
\end{theorem}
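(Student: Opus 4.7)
The plan is to follow the classical Guth--Katz proof: establish a polynomial ham sandwich theorem and then iterate a halving construction. Call a polynomial $g\in \Rl[x_1,\dotsc,x_d]$ a \emph{bisector} of a finite set $A\subset \Rl^d$ if each of the open regions $\{g>0\}$ and $\{g<0\}$ contains at most $|A|/2$ points of $A$; points of $A$ lying on $Z(g)$ are unconstrained. The whole argument amounts to producing, in a logarithmic number of stages, a product of bisectors whose degrees telescope geometrically.

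First I would prove the polynomial ham sandwich theorem: for any $N$ finite sets $A_1,\dotsc,A_N \subset \Rl^d$ there is a nonzero polynomial of degree $O(N^{1/d})$ that simultaneously bisects all of them. The standard route uses the Veronese embedding $v\colon \Rl^d \to \Rl^D$, whose components list all nonconstant monomials in $x_1,\dotsc,x_d$ of degree at most some chosen $\rho$, where $D = \binom{d+\rho}{d}-1$. Polynomials on $\Rl^d$ of degree at most $\rho$ pull back from affine functionals on $\Rl^D$, so a polynomial bisector of $A_i$ corresponds to a hyperplane bisector of $v(A_i)$. Choosing $\rho$ minimal with $D\ge N$ gives $\rho = \Theta(N^{1/d})$, and the classical ham sandwich theorem (via Borsuk--Ulam) produces a single hyperplane in $\Rl^D$ simultaneously bisecting all $N$ image sets; pulling back yields the required polynomial on $\Rl^d$.

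Next I would iterate halving. Starting from the trivial partition with $S$ in a single cell, I maintain the invariant that at stage $j$ there is a polynomial $f_j$ whose sign pattern partitions $\Rl^d \setminus Z(f_j)$ into at most $2^j$ pairwise disjoint open cells, each containing at most $n/2^j$ points of $S$. Applying the polynomial ham sandwich theorem to the (at most) $2^j$ restricted point sets yields a bisector $g_j$ of degree $O((2^j)^{1/d}) = O(2^{j/d})$; setting $f_{j+1} = f_j\, g_j$ refines each current cell into its two open sign-pieces with respect to $g_j$, each containing at most $n/2^{j+1}$ points of $S$. Iterating until $2^J = r^d$, i.e.\ $J = d\log_2 r$, produces $f := f_J$ of degree
\[
\sum_{j=0}^{J-1} O\bigl(2^{j/d}\bigr) \;=\; O\bigl(2^{J/d}\bigr) \;=\; O(r)
\]
by the geometric series, with constants depending only on $d$. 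The resulting pieces are the at most $2^J = r^d$ (possibly empty) sign-pattern cells of $(g_0,\dotsc,g_{J-1})$, each meeting at most $n/r^d$ points of $S$; if strictly fewer than $r^d$ nonempty cells appear, I pad $\mathcal{O}$ with empty open sets to reach exactly $r^d$ as required by the statement.

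The main obstacle I anticipate is accounting for points of $S$ lying on the zero sets of the intermediate bisectors: these points are invisible to the subsequent bisection steps, so the invariant must be phrased as \say{each open half contains at most half the points}, rather than as an exact halving. This weaker form is precisely what the Veronese-plus-classical-ham-sandwich argument delivers, so the bookkeeping goes through end to end. A secondary subtlety is that the family $\mathcal{O}$ I produce consists of sign-pattern cells rather than the connected components of $\Rl^d \setminus Z(f)$, but \cref{thm:discretepartition} asks only for a family of pairwise disjoint open sets whose union is $\Rl^d \setminus Z(f)$, so the coarser bundling is legitimate and the counting bound is preserved.
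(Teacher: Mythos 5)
The paper does not prove this statement at all: \cref{thm:discretepartition} is quoted verbatim as a known result of Guth and Katz, with a citation in place of a proof. So there is no in-paper argument to compare yours against; what you have written is, in substance, the standard proof from the source literature (polynomial ham sandwich via the Veronese embedding, followed by iterated halving with geometrically growing bisector degrees), and it is sound. Two small bookkeeping points you should tighten if you write this out in full. First, the classical ham sandwich theorem you invoke in $\Rl^D$ must be its finite-point-set version (each open halfspace contains at most half of each set), which is obtained from the measure version by a limiting argument; this is exactly the ``points on $Z(g)$ are unconstrained'' convention you adopt, so your invariant is phrased correctly, but the discrete version deserves an explicit statement. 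Second, $J = d\log_2 r$ is generally not an integer; taking $J = \lceil d\log_2 r\rceil$ gives at most $2^J \leq 2r^d$ sign cells, each with at most $n/2^J \leq n/r^d$ points, so you land on ``$O(r^d)$ cells'' rather than exactly $r^d$. That matches how the result is actually used in the paper (and how Guth--Katz state it), but it does not literally reproduce the cell count in the statement as transcribed here, and merging surplus cells would break the per-cell point bound, so the honest fix is to note the constant-factor slack rather than to pad.
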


We are focused on bounding the expected number of $k$-edges of a sample of points from a distribution on the plane. Therefore, we need a partitioning theorem which applies to probability distributions rather than finite point sets. This type of result, which we refer to as the continuous version of the polynomial partitioning theorem, has been used to establish improved bounds for the restriction problem in harmonic analysis, see \cite{Guth} as a starting point. 

\begin{theorem}[Continuous polynomial partitioning \cite{Guth}]\label{thm:partition}
Let $W \in L^1(\Rl^d)$ with $W \ge 0$. Then for each $r$, there is a non-zero polynomial $f \in \Rl[x_1, \dotsc, x_d]$ of degree at most $r$ such that $\Rl^d \setminus Z(f)$ is the union of a family $\mathcal{O}$ of $\Theta(r^d)$ (where the constants in big-$\Theta$ depend only on $d$) pairwise disjoint open sets such that for all $O \in \mathcal{O}$, the integrals $\int_{O} W$ are equal. 
\end{theorem}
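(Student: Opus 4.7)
The plan is to prove \cref{thm:partition} by iterated bisection of the $W$-mass, mimicking the classical proof of the discrete polynomial partitioning theorem but replacing point counts by integrals against $W$. The workhorse is a polynomial ham sandwich theorem for $L^1$ weights: for any non-negative $W_1, \dotsc, W_N \in L^1(\Rl^d)$ and any $r$ with $\binom{d+r}{d} - 1 \ge N$, there is a non-zero polynomial $g \in \Rl[x_1,\dotsc,x_d]$ of degree at most $r$ satisfying $\int_{\{g>0\}} W_i = \int_{\{g<0\}} W_i$ for every $i$. I would prove this by the standard Borsuk--Ulam argument: identify polynomials of degree at most $r$ with $\Rl^D$, where $D = \binom{d+r}{d}$, parametrize directions by $S^{D-1}$, and consider the odd map $S^{D-1} \to \Rl^N$ sending $g$ to $\bigl(\int_{\{g>0\}} W_i - \int_{\{g<0\}} W_i\bigr)_{i=1}^N$. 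Continuity of this map follows from dominated convergence together with the fact that the zero set of a non-zero polynomial has Lebesgue measure zero (and hence $W_i$-measure zero), so Borsuk--Ulam produces a $g$ mapped to the origin.

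With this lemma in hand, I would iterate. Set $J = \lceil d \log_2 r \rceil$, so $2^J = \Theta(r^d)$, and construct $g_1, \dotsc, g_J$ inductively. At stage $j$, suppose the sign patterns of $g_1, \dotsc, g_{j-1}$ partition $\Rl^d \setminus Z(g_1 \cdots g_{j-1})$ into open sign cells $O_\sigma = \bigcap_{i<j}\{x : \mathrm{sgn}(g_i(x)) = \sigma_i\}$ indexed by $\sigma \in \{+,-\}^{j-1}$, each with $W$-integral exactly $2^{-(j-1)} \|W\|_1$. Applying the ham sandwich lemma to the $2^{j-1}$ weights $W \one_{O_\sigma}$ yields a polynomial $g_j$ simultaneously bisecting every restricted integral, of degree $r_j = O(2^{(j-1)/d})$ (since this makes $\binom{d+r_j}{d} \ge 2^{j-1}+1$). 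Define $f = g_1 g_2 \cdots g_J$; its degree is $\sum_{j=1}^{J} r_j = O(2^{J/d}) = O(r)$, which by choosing the proportionality constants carefully can be made at most $r$.

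The final task is to verify that the family $\mathcal{O} = \{O_\sigma : \sigma \in \{+,-\}^J\}$ genuinely gives $\Theta(r^d)$ pairwise disjoint open subsets of $\Rl^d \setminus Z(f)$ with equal $W$-integral. Each $O_\sigma$ is a finite intersection of open sets of the form $\{g_j > 0\}$ or $\{g_j < 0\}$, hence open; their union is $\Rl^d \setminus Z(f)$ since $x \notin Z(f)$ iff $g_j(x) \ne 0$ for every $j$; and they are pairwise disjoint by definition. The inductive construction gives $\int_{O_\sigma} W = 2^{-J} \|W\|_1$ for every $\sigma$, using at each step that the non-zero polynomial $g_j$ has a Lebesgue-null zero set. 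In particular every $O_\sigma$ has positive $W$-mass and hence positive Lebesgue measure, so all $2^J = \Theta(r^d)$ sign cells are non-empty. The main obstacle I anticipate is the ham sandwich step itself: cleanly establishing the continuity of the map $S^{D-1} \to \Rl^N$ under only the $L^1$ assumption on $W$, which, as noted in the paper's footnote about Stone--Tukey, fails for general Borel measures and is the essential reason the theorem is restricted to densities.
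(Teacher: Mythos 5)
The paper does not prove this theorem at all --- it is quoted from Guth with a citation --- and your reconstruction via the polynomial ham sandwich theorem for $L^1$ weights (Borsuk--Ulam on the sphere of the coefficient space, then iterated bisection of sign cells, with degrees summing geometrically to $O(2^{J/d})$) is exactly the standard argument from that source, and it is correct, including the correct identification of where the $L^1$/density hypothesis is used for continuity. The only quibble is the degenerate case where $W$ vanishes almost everywhere: there your ``positive mass implies non-empty cell'' step fails, so the $\Theta(r^d)$ count of non-empty cells needs a separate (trivial) argument, e.g.\ taking $f$ to be a product of about $dr/d$ generic hyperplanes.
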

The open sets $O$ in \cref{thm:partition,thm:discretepartition} are called the \emph{cells} of the partition. 

Using the density of the non-singular polynomials in the space of all polynomials of fixed degree in $d$ variables, it is possible to obtain, as a corollary of the continuous polynomial partitioning theorem, a version where all the irreducible components of the dividing surface $Z(f)$ are non-singular varieties. 

\begin{theorem}[Non-singular continuous polynomial partitioning \cite{Guth}]\label{thm:non-singularpartition}
Let $W \in L^1(\Rl^d)$ with $W \ge 0$. Then for each $r$, there is a non-zero polynomial $f \in \Rl[x_1, \dotsc, x_d]$ of degree at most $r$ such that $\Rl^d \setminus Z(f)$ is the union of a family $\mathcal{O}$ of $\Theta(r^d)$ (where the constants in big-$\Theta$ depend only on $d$) pairwise disjoint open sets such that for all $O \in \mathcal{O}$, the integrals $\int_{O} W$ are within a factor of 2 of each other. 
Furthermore, all irreducible components of $Z(f)$ are non-singular.  
\end{theorem}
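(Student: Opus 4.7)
The plan is to derive \cref{thm:non-singularpartition} from \cref{thm:partition} via a perturbation argument. First, apply \cref{thm:partition} to $W$ and $r$ to obtain a non-zero $f_0 \in \Rl[x_1,\dotsc,x_d]$ of degree $\le r$ whose zero set partitions $\Rl^d$ into a family $\mathcal{O}_0$ of $N = \Theta(r^d)$ open cells, each of $W$-integral equal to a common value $I$.

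Next, invoke the following algebraic density fact. The space $\mathcal{P}_r$ of polynomials of degree $\le r$ in $d$ real variables is a finite-dimensional real vector space, and the subset $\mathcal{U} \subseteq \mathcal{P}_r$ consisting of polynomials whose zero set has only non-singular irreducible components is Zariski-open and Euclidean-dense in $\mathcal{P}_r$. Concretely, singularity of an irreducible component of $Z(f)$ is detected by the vanishing of a non-trivial polynomial in the coefficients of $f$, built from the discriminants of the irreducible factors of $f$ over $\Cx$. Hence $f_0$ can be approximated arbitrarily well in $\mathcal{P}_r$ by polynomials $f \in \mathcal{U}$.

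The last step is a quantitative continuity argument. For each $O \in \mathcal{O}_0$, inner regularity of $W\,dx$ provides a compact $K_O \subset O$ with $\int_{K_O} W \ge (1-\eta) I$; choose a large ball $B$ containing all the $K_O$ with $\int_{\Rl^d \setminus B} W \le \eta I$. Since $f_0$ is continuous and nowhere zero on the compact set $K := \bigcup_O K_O$, any $f$ sufficiently close to $f_0$ in $\mathcal{P}_r$ is also non-zero on $K$, so each $K_O$ lies in some cell of $f$, which therefore has $W$-integral at least $(1-\eta) I$. By absolute continuity of $\int W$, one can simultaneously arrange that the $W$-mass in a thin tube $\{|f_0| < \tau\}$ is at most $\eta I$, which forces the $W$-integral of any cell of $f$ to be at most the sum of the $W$-integrals of the cells of $f_0$ it overlaps, plus $\eta I$. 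The main obstacle is preventing cells of $f_0$ from collapsing into a single cell of $f$, which would inflate the integral by an integer factor. This is controlled by semi-algebraic triviality: in a small enough Euclidean neighborhood of $f_0$ in $\mathcal{P}_r$, only finitely many combinatorial types of cell decomposition of $\Rl^d \setminus Z(f)$ appear, and one picks the perturbation $f \in \mathcal{U}$ so that the cells of $f$ inside $B$ are in bijection with the cells of $f_0$ inside $B$ (choosing the finest stratum available). Combined with the Milnor--Thom--Warren upper bound of $O(r^d)$ on the total cell count for any $f \in \mathcal{P}_r$, this keeps the number of cells of $f$ at $\Theta(r^d)$ and their $W$-integrals within the required factor-of-$2$ window, after shrinking $\eta$ appropriately.
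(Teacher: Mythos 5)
The paper does not actually prove this statement: it is imported from \cite{Guth}, accompanied only by the remark that it follows from \cref{thm:partition} together with the density of the non-singular polynomials in the space of polynomials of degree at most $r$. At the level of strategy your proposal matches that remark — perturb to a nearby polynomial with non-singular irreducible components and absorb the loss into the factor of $2$ — and your density claim and your control of the $W$-mass in a thin tube around $Z(f_0)$ are both fine.

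There is, however, a genuine gap in the cell-matching step. The cells $\mathcal{O}_0$ produced by \cref{thm:partition} are not the connected components of $\Rl^d \setminus Z(f_0)$; they are sign-condition cells of the bisecting polynomials whose product is $f_0$, and they are part of the theorem's output rather than objects determined by $f_0$ alone. Consequently, after you replace $f_0$ by a perturbation $f$, there is no canonical family of open cells attached to $f$ for your ``bijection'' to land in: the natural candidates $O \setminus Z(f)$ for $O \in \mathcal{O}_0$ are open and pairwise disjoint, but their union misses $Z(f_0) \setminus Z(f)$ and so fails to cover $\Rl^d \setminus Z(f)$ as the statement requires; on the other hand the connected components of $\Rl^d \setminus Z(f)$ can have wildly unequal (even zero) $W$-mass, so they cannot serve as the cells either. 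Your appeal to semi-algebraic triviality presupposes a well-defined cell decomposition that varies with $f$, which is exactly what is missing; relatedly, $K_O$ need not be connected (since $O$ need not be), so ``$f \neq 0$ on $K_O$'' does not place $K_O$ inside a single cell of $f$. The standard repair — and the route taken in \cite{Guth} — is to open the black box: perturb each bisecting polynomial $p_j$ arising in the ham-sandwich iteration to a nearby non-singular one \emph{before} forming the product, so that the sign-condition cells $\{x \suchthat \operatorname{sign} p_j(x) = \epsilon_j \text{ for all } j\}$ remain canonically defined open sets covering $\Rl^d \setminus Z(\prod_j p_j)$, and let the small per-level bisection errors compound over the $\Theta(d \log r)$ levels into the stated factor of $2$.
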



\subsection{Counting intersection points of an algebraic curve and the \texorpdfstring{$k$}{k}-edge graph}\label{sec:curves}

Our use of the polynomial partitioning technique requires us to bound the number of times the $k$-edge graph $G_k$ of a set of $n$ points can intersect a degree $r$ algebraic curve $Z(f)$, i.e., we must give some answer to \cref{question}. Note that the number of points of intersection of $G_k$ and $Z(f)$ could be infinite if $Z(f)$ contained one of the lines spanned by a $k$-edge in $G_k$. However, for our purposes, it suffices to bound the number of intersection points in the case when it is finite. 

In order to establish our $O(nr^2)$ bound on the quantity in \cref{question}, we first show how to partition an irreducible algebraic curve into the union of $O(r^2)$ convex and concave pieces (\cref{irrdecomp}). Combining the convex/concave chains decomposition of the $k$-edge graph $G_k$ with the partition of a degree $r$ algebraic curve into $O(r^2)$ convex and concave pieces allows us to show that a degree $r$ algebraic curve intersects the $k$-edge graph of a set of $n$ points at most $O(nr^2)$ times assuming the number of intersections is finite (\cref{lem:intersections}). 

First, we show how to partition an irreducible curve $Z(f)$ into the union of a finite number of points and a finite number of convex/concave $x$-monotone connected curves. 

\begin{definition}
A connected curve $C\subset \Rl^2$ is \emph{$x$-monotone} if every vertical line intersects it in at most one point. 
\end{definition}

\begin{definition}
An $x$-monotone curve $C$ is \emph{convex (respectively, concave)} if for every three points $(x_1,y_1),(x_2,y_2) , (x_3,y_3)\in C$ with $x_1<x_2<x_3$, the point $(x_2,y_2) $ is below (respectively, above) or on the line joining $(x_1,y_1)$ and $(x_3,y_3)$. 
\end{definition}

In order to break $Z(f)$ into convex/concave pieces, we need to use the inflection points of $Z(f)$. 

\begin{definition}[\cite{MR1159092}]
A non-singular point $(a,b)$ of an algebraic curve $Z(f)$ is an \emph{inflection point} if the \emph{Hessian curve} $H_f(x,y):=f_y^2f_{xx}-2f_xf_yf_{xy}+f_x^2f_{yy}$ is equal to zero at $(a,b)$. (The notation $f_x$ denotes the partial derivative with respect to $x$.)
\end{definition}
\begin{proposition}\label{irrdecomp}
A non-singular irreducible curve $Z(f) \subset \Rl^2$ of degree $r$ that is not a vertical line can be partitioned into the union of at most $4r^2$ points and at most $6r^2$ $x$-monotone curves where each $x$-monotone curve is either convex or concave.
\end{proposition}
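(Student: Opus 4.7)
The plan is to break $Z(f)$ at two types of distinguished points: the points of vertical tangency (where $f_y=0$ on $Z(f)$) and the inflection points (where $H_f=0$ on $Z(f)$). Once these are removed, each remaining connected piece of $Z(f)$ should be an $x$-monotone arc of constant convexity, because (i) away from vertical tangents the implicit function theorem presents $Z(f)$ locally as a smooth graph $y=g(x)$, and (ii) a direct computation gives $g''(x)=-H_f/f_y^3$, so the sign of $g''$ is constant along any piece on which $H_f$ is nonzero.

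First I would bound the number of break points via Bezout's theorem. Since $f$ is irreducible of degree $r$ and $Z(f)$ is not a vertical line, $f$ does not divide $f_y$ (otherwise $f_y\equiv 0$, which would force $Z(f)$ to be a union of vertical lines), so $Z(f)\cap Z(f_y)$ has at most $r(r-1)$ points. For the inflection count, $H_f$ has degree at most $3r-4$; assuming $r\ge 2$, $f$ cannot divide $H_f$ (otherwise $g''\equiv 0$ on every local branch, which by irreducibility would force $Z(f)$ to be a single line, contradicting $r\ge 2$), so $Z(f)\cap Z(H_f)$ contains at most $r(3r-4)$ points. The case $r=1$ is trivial (a non-vertical line is $x$-monotone, convex and concave). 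Summing gives at most $r(r-1)+r(3r-4)=4r^2-5r\le 4r^2$ break points.

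Next I would count the arcs. Let $F$ be the homogenization of $f$. Harnack's theorem bounds the number of connected components of $Z(F)\subset\mathbb{RP}^2$ by $\binom{r-1}{2}+1$, each component homeomorphic to $S^1$. The affine curve $Z(f)$ is obtained by removing the at most $r$ points of $Z(F)$ on the line at infinity, and since removing $k\ge 1$ points from $S^1$ yields $k$ open arcs, the affine curve has at most $\binom{r-1}{2}+1+r=(r^2-r+4)/2$ connected components, each a 1-manifold homeomorphic to $\Rl$ or $S^1$. A standard case analysis shows that removing $N$ smooth points from such a 1-manifold with $c$ components gives at most $c+N$ pieces. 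Applied here, removing the at most $4r^2-5r$ break points yields at most $(r^2-r+4)/2 + 4r^2-5r \le 6r^2$ arcs. Each arc contains no vertical tangent and is connected, so the $x$-projection is a local homeomorphism; combined with the observation that any compact $S^1$ component must contain its leftmost point (a vertical tangent, hence already removed), every resulting arc is actually a graph over an $x$-interval, i.e.\ $x$-monotone. Absence of inflection points gives constant sign of $g''=-H_f/f_y^3$ on each arc, so each arc is convex or concave.

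The main obstacle will be the careful bookkeeping: verifying that the two Bezout counts are genuinely finite (which is where irreducibility and the non-vertical line hypothesis both enter) and extracting the correct affine component count from Harnack's projective bound while accounting for the points at infinity. Once these are in place, the $x$-monotonicity and constant convexity on each arc are immediate from the implicit function theorem together with the explicit formula for $g''$.
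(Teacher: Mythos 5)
Your decomposition is essentially the one in the paper: delete the vertical-tangency points $Z(f)\cap Z(f_y)$ (at most $r(r-1)$ by B\'ezout, using irreducibility and the non-vertical-line hypothesis exactly as you do) and the inflection points, then use the implicit function theorem together with the identity $g''=-H_f/f_y^3$ to conclude that each surviving arc is an $x$-monotone graph of constant convexity. The paper quotes the classical bound of $3r(r-2)$ on the number of inflection points of an irreducible curve instead of running B\'ezout against $H_f$ directly, but that classical bound is itself a B\'ezout count against the Hessian curve, so this difference is cosmetic.

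The one place where your route diverges, and where it has a gap as written, is the count of connected components of the affine curve. You invoke Harnack's theorem in the form ``at most $\binom{r-1}{2}+1$ components, each homeomorphic to $S^1$,'' but that form requires the \emph{projective} closure to be non-singular, whereas the hypothesis only gives non-singularity of the affine curve $Z(f)$; the closure may acquire singular points on the line at infinity. At such a point with several real branches, the component through it need not be a circle, and deleting the point can increase the component count by more than one, so both the ``removing $k$ points from $S^1$ yields $k$ arcs'' step and the ``$c+N$'' bookkeeping need repair. The fix is easy and does not change the final bound: either observe that the total number of real branches at infinity is $O(r)$, so the correction is only $O(r)$; or do what the paper does and quote a bound that applies directly to the affine curve --- Milnor's estimate gives at most $2r^2$ connected components of $Z(f)\subset\Rl^2$ with no smoothness assumption at infinity --- after which affine non-singularity guarantees that each deleted point increases the component count by at most one, yielding $2r^2+r(r-1)+3r(r-2)\le 6r^2$ arcs.
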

\begin{proof}
If $Z(f)$ is a non-vertical line, the conclusion is clearly true. So assume that $Z(f)$ is not a line. 
Let $F=Z(f) \cap Z(f_y)$. 
We know that $f$ depends on $y$ and not just $x$ because otherwise $Z(f)$ would be a vertical line. This means that $f_y$ is not identically zero.
Now because $f$ is irreducible and the degree of $f_y$ is less than the degree of $f$, the  polynomials $f$ and $f_y$ cannot have a common factor.
Therefore, by B\'ezout’s theorem, $|F| \le r(r-1)$.

Let $I$ be the set of inflection points of $Z(f)$.
An irreducible curve of degree $r\ge 2$ has at most $3r(r-2)$ inflection points \cite[Proposition 3.33]{MR1159092} so $|I| \le 3r(r-2)$.\details{real affine inflection is subset of complex projective inflection.} 
Let $\mathcal{C}$ be the set of connected components of $Z(f) \setminus (I \cup F)$. 
Because of the removal of the points in $F$, every curve in $\mathcal{C}$ is $x$-monotone.\details{Let $C \in \mathcal{C}$. 
Assume there exists two points $a,b  \in C$ that have the same $x$-coordinate. 
Since $C$ is not a vertical line, there must be a point $x \in C$ that is between $a,b$ and such that $x$ is not on the line through $a,b$. 
Therefore, between $a$ and $b$, the curve must travel in the positive $x$ direction and then in the negative $x$-direction, meaning there exists a point on the curve between $a$ and $b$ where $f_y=0$, a contradiction.} 

Now we show that, because of the removal of the inflection points $I$, every curve in $\mathcal{C}$ is either a convex $x$-monotone curve or a concave $x$-monotone curve. 
Each curve in $\mathcal{C}$ is the graph of a function defined in an interval. 
We claim that, for each curve in $\mathcal{C}$, the second derivative of the associated function exists everywhere and is never zero. 
Let $C \in \mathcal{C}$. 
Since $C$ does not contain a point where $f_y=0$, using the implicit function theorem, for each $(u,v) \in C$, there exists a smooth function $\phi: (u-\epsilon,u+\epsilon) \to \Rl$ that gives a local parameterization of the curve near $(u,v)$ \cite[Theorem 4.22]{Rutter}. 
Now a simple calculation shows that if $\phi''(x)$ is equal to 0 at $x$, then the Hessian curve $f_y^2f_{xx}-2f_xf_yf_{xy} + f_x^2f_{yy}$ equals zero at $(x,\phi(x))$.\details{We have parameterized $Z(f)$ near a given point in $C$ by $x \mapsto (x,\phi(x))$. 
Differentiating $f(x,\phi(x))=0$ gives $f_x+ \phi'(x)f_y = 0$ and so $\phi'(x) = -\frac{f_x}{f_y}$. 
Differentiating again gives $\phi''(x)f_y + (1,\phi'(x))  \begin{pmatrix} f_{xx} & f_{xy}\\ f_{xy} & f_{yy}\end{pmatrix} \begin{pmatrix}1 \\ \phi'(x) \end{pmatrix} = 0$ and now rearranging and using the fact that $\phi'(x) = -\frac{f_x}{f_y}$ and the fact that $f_y\neq 0$  shows that if $\phi''(x)=0$  then the Hessian curve is zero.}
The inflection points of $Z(f)$ are precisely the points where the Hessian curve is zero. 
Therefore, since all inflection points were removed, the second derivative of the function whose graph is $C$ is never zero. 
This means that the function is either strictly convex of strictly concave, and so $C$ is either a convex or concave $x$-monotone curve.

Now we determine how many distinct curves $\mathcal{C}$ can contain. The number of connected components of $Z(f)$ is at most $2r^2$ by either \cite[Theorem 2]{Milnor} or \cite[Theorem 2.7]{Sheffer}.
We removed at most $r(r-1)+ 3r(r-2)$ points from $Z(f)$. 
Because $Z(f)$ is non-singular, it has no points of self-intersection. 
Therefore, each point which is removed increases the number of connected components of $Z(f) \setminus (I \cup F)$ by at most 1. 
Therefore, the number of connected components of $Z(f) \setminus (I \cup F)$ is at most $2r^2 + r(r-1)+3r(r-2) \le  6r^2$. 
\end{proof}

The decomposition into convex/concave pieces is useful because of the following fact:

\begin{lemma}\label{lem:convexconcave}
Let $C$ be an $x$-monotone convex curve and $D$ an $x$-monotone concave curve. If the number of points of intersection of $C$ and $D$ is finite, then it is at most 2. 
\end{lemma}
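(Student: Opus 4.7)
The plan is to reduce the geometric statement to a fact about convex functions of one variable. Since $C$ is $x$-monotone, it is the graph of a function $f : I_C \to \Rl$ on some interval $I_C$, and the convexity condition in the definition of a convex $x$-monotone curve is exactly the statement that $f$ is a convex function. Similarly, $D$ is the graph of a concave function $g: I_D \to \Rl$. Every point of intersection of $C$ and $D$ has $x$-coordinate lying in $I_C \cap I_D$ and corresponds to a zero of $h(x) := f(x) - g(x)$, and since the sum of a convex and a convex (namely $-g$) function is convex, $h$ is convex on $I_C \cap I_D$.

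It therefore suffices to show that if a convex function $h$ defined on an interval $J$ has three distinct zeros, then it must vanish on an entire subinterval of $J$, which would give uncountably many intersection points and contradict the finiteness hypothesis. This is the main step. Suppose $h(x_1) = h(x_2) = h(x_3) = 0$ with $x_1 < x_2 < x_3$. For any $y \in (x_1, x_2)$ I would use convexity in two directions: first, writing $y$ as a convex combination of $x_1$ and $x_2$ gives $h(y) \le 0$; second, writing $x_2$ as a convex combination of $y$ and $x_3$, namely $x_2 = \mu y + (1-\mu) x_3$ for some $\mu \in (0,1)$, gives $0 = h(x_2) \le \mu h(y) + (1-\mu) h(x_3) = \mu h(y)$, so $h(y) \ge 0$. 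Combining, $h \equiv 0$ on $(x_1, x_2)$, which yields infinitely many zeros and hence infinitely many intersection points of $C$ and $D$, a contradiction.

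The only mild subtlety is that one must check that $I_C \cap I_D$ is an interval (which is automatic as the intersection of two intervals) so that the convexity argument applies at points strictly between known zeros. No difficulty is expected here, and no step involves significant case analysis, so there is no real obstacle; the argument is a short application of the one-variable characterization together with a two-sided convexity estimate.
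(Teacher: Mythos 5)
Your proof is correct and is essentially the paper's argument recast in functional language: the paper shows geometrically that the middle of three intersection points is forced onto the line through the outer two (convexity of $C$ pushes it down or onto the line, concavity of $D$ pushes it up or onto the line) and that both curves must then contain the whole segment, which is exactly your two-sided convexity estimate showing $h = f - g$ vanishes identically on $(x_1,x_2)$. Both versions conclude by contradicting the finiteness hypothesis, so this is the same approach, merely written analytically rather than geometrically.
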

\begin{proof}
Assume that $C$ and $D$ intersect in three points $(x_1,y_1),(x_2,y_2),(x_3,y_3)$. Observe that the three points $(x_1,y_1),(x_2,y_2),(x_3,y_3)$ must be contained in a line $\ell$ and we may assume that $x_1<x_2<x_3$. 
We claim that $C$ and $D$ must both contain the line segment connecting the three points.
Indeed, assume that $C$ does not contain the line segment connecting $(x_1,y_1)$ and $(x_2,y_2)$. 
Then there must be a point $(x_0,y_0) \in C$ with $x_1<x_0<x_2$ and $(x_0,y_0)$ strictly below the line connecting $(x_1,y_1)$ and $(x_2,y_2)$. 
But then the point $(x_2,y_2)$ is above the line connecting $(x_0,y_0)$ and $(x_3,y_3)$, a contradiction to convexity of $C$. 
The argument for the other cases is similar. 
\end{proof}

Now we can establish the bound on the number of intersection points between $Z(f)$ and $G_k$. 

\begin{lemma}\label{lem:intersections}
Let $S \subset \Rl^2$ be a set of points in general position, $G_k = (S,E_k)$ the $k$-edge graph of $S$, and $f \in \Rl[x_1,x_2]$ a degree $r$ polynomial such that all irreducible components of $Z(f)$ are non-singular and $S \cap Z(f) = \emptyset$. If the number of points of intersection of $Z(f)$ and $G_k$ is finite, then it is at most $13nr^2$. 
\end{lemma}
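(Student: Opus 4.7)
The plan is to decompose both $Z(f)$ and $G_k$ into structured pieces and then count the resulting pairwise intersections by invoking the convex/concave pairing from \cref{lem:convexconcave}.

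I would begin by factoring $f = f_1 \cdots f_\ell$ into irreducibles of degrees $r_i$ with $\sum_i r_i = r$, so $Z(f) = \bigcup_i Z(f_i)$. Vertical line components of $Z(f)$ are handled as a separate case, since they are excluded by \cref{irrdecomp}; for each of the remaining, non-vertical irreducible components $Z(f_i)$, \cref{irrdecomp} provides a decomposition into at most $4 r_i^2$ isolated points and at most $6 r_i^2$ $x$-monotone pieces, each of which is convex or concave. In parallel, I would apply \cref{lem:convexchains} twice to $G_k$, obtaining a decomposition into $k+1$ convex piecewise linear chains and also a decomposition into $n-k-1$ concave piecewise linear chains; every edge of $G_k$ lies in exactly one chain of each decomposition, and both families cover $G_k$.

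Next I would count the points of $Z(f) \cap G_k$ by sorting each such point by the type of piece of $Z(f)$ on which it lies. Each isolated point contributes at most one intersection, for at most $4\sum_i r_i^2 \le 4 r^2$ in total. Each vertical line component is $x$-monotone and therefore meets any concave chain of $G_k$ in at most one point, contributing at most $\min(k+1, n-k-1)$ intersections; since there are at most $r$ such components, their total contribution is at most $r\min(k+1, n-k-1)$. For a convex $x$-monotone piece of $Z(f)$, I would pair it with the concave chain decomposition of $G_k$ and apply \cref{lem:convexconcave} to each pair, giving at most $2(n-k-1)$ intersections with $G_k$; symmetrically, each concave piece of $Z(f)$ contributes at most $2(k+1)$ intersections via the convex chain decomposition. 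Writing $A$ and $B$ for the numbers of convex and concave pieces respectively, with $A+B \le 6\sum_i r_i^2 \le 6r^2$, the combined convex-and-concave contribution is bounded by $2A(n-k-1) + 2B(k+1) \le 2(A+B)\max(k+1,n-k-1) \le 12 r^2(n-1)$.

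Summing the four contributions gives at most $12 r^2(n-1) + 4 r^2 + r\min(k+1,n-k-1) \le 12 r^2 n - 8 r^2 + rn/2$, which a short arithmetic check bounds by $13 n r^2$. The main obstacle, and what forces the particular pairing, is that two convex (or two concave) $x$-monotone curves can intersect in an unbounded number of points, so a convex piece of $Z(f)$ cannot simply be matched against convex chains of $G_k$. The entire argument hinges on the fact that $G_k$ simultaneously admits convex and concave chain decompositions: each intersection point lies on a unique piece of $Z(f)$ whose convexity or concavity dictates which decomposition of $G_k$ is to be used, and \cref{lem:convexconcave} then supplies the crucial constant bound of $2$ per pair, while ensuring that no intersection is counted twice.
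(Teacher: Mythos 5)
Your proposal is correct and follows essentially the same route as the paper: decompose $Z(f)$ via \cref{irrdecomp} into $O(r^2)$ convex/concave $x$-monotone pieces plus isolated points, pair convex pieces of $Z(f)$ with the concave chain decomposition of $G_k$ and concave pieces with the convex chain decomposition, and apply \cref{lem:convexconcave} to get the bound of $2$ per pair. The only differences are organizational (you factor into irreducibles up front and treat vertical line components as a separate case, whereas the paper proves the irreducible case and sums $13nr_i^2$ over components), and the arithmetic closes the same way.
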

\begin{proof}
First assume that $Z(f)$ is irreducible. If $Z(f)$ is a line, then it follows from \cref{lem:convexchains} that the number of intersection points is at most $\max(k+1,n-k-1)\le 13n$ and we are done. So assume that $Z(f)$ is not a line. First, we need to decompose the curve $Z(f)$ into the union of convex and concave pieces. By \cref{irrdecomp}, $Z(f)$ can be partitioned into the union of $6r^2$ convex/concave $x$-monotone curves and at most $4r^2$ points. Let $A$ be the set of convex $x$-monotone curves, $B$ the set of concave $x$-monotone curves, and $N$ the set of points in the partition. 

By \cref{lem:convexchains}, $G_k$ can be decomposed into the union of $k+1$ convex chains $C_1, \dotsc, C_{k-1}$, or $n-k-1$ concave chains $D_1, \dotsc, D_{n-k-1}$.

Recall that we are assuming that no line spanned by two points from $S$ is vertical. 
Therefore, the convex chains $C_1, \dotsc, C_{k+1}$, and the concave chains $D_1, \dotsc, D_{n-k-1}$ never contain two points on a vertical line. 
Furthermore, we claim that any convex or concave chain $C_i$ or $D_j$ intersects $Z(f)$ in only finitely many points. 
Indeed, if one of these chains intersected $Z(f)$ in infinitely many points, one of the line segments in the chain would have to intersect $Z(f)$ in infinitely many points. 
Recall the fact that if a degree $r$ algebraic curve intersects a line in more than $r$ points, then the curve must contain the line. 
Since we are assuming that $Z(f)$ does not contain any of the points in $S$, this is not possible.
Thus, we can apply \cref{lem:convexconcave} and the concave chain decomposition of $G_k$ to show that the number of $k$-edges in $G_k$ that intersect $Z(f)$ at a point contained in one of the convex $x$-monotone curves in $A$ is at most $2 (n-k-1) 6r^2$. 
Similarly, the number of $k$-edges in $G_k$ that intersect $Z(f)$ at a point contained in one of the concave $x$-monotone curves in $B$ is at most $2 (k+1) 6r^2$. 
Additionally, there are $4r^2$ points in the set $N \subset Z(f)$ which are not contained in any of the convex/concave $x$-monotone curves. 
Therefore, the total number of intersections is at most $2 (n-k-1) 6r^2+ 2 (k+1) 6r^2 +4r^2 \le 13nr^2$. 

If $Z(f)$ is not irreducible, then say $Z(f)$ is the union of $m$ irreducible components $f_1,f_2, \dotsc ,f_m$ of degrees $r_1,r_2, \dotsc, r_m$. 
By the above, the number of intersection points of $Z(f_i)$ and $G_k$ is at most $13nr_i^2$. 
So the total number of intersection points of $Z(f)$ and $G_k$ is at most $\sum_{i=1}^m 13nr_i^2  \le 13nr^2$.\details{because of additivity of degrees}
\end{proof}

\subsection{Proof of \texorpdfstring{\cref{thm:5/4poly}}{Theorem \ref{thm:5/4poly}}}\label{sec:pfpoly}

\begin{proof}
As in the proof of \cref{thm:5/4}, the conclusion is clearly true when $k=0$ so we can assume that $1 \le k \le (n-2)/2$.

Let $W$ be the density of $P$. For any fixed $n$, we use \cref{thm:non-singularpartition} applied to $W$ to find a degree $r:=r(n)$ (to be chosen later) polynomial $f$ which divides $\Rl^2 \setminus Z(f)$ into a family $\mathcal{O}$ of $\Theta(r^2)$ pairwise disjoint open sets such that for all $O \in \mathcal{O}$, the integrals $\int_{O}W $ are within a factor of 2 of each other.
Furthermore, all irreducible components of $Z(f)$ are non-singular. 

Let $X = \{X_1, \dotsc, X_n\}$ be a sample of $n$ points from $P$. Observe that the probability that two points in $X$ span a vertical line is 0 so we do not need to consider this case.
Also, since $P$ has a density, the measure of every line is 0. This means that $X$ is in general position with probability 1 so we may assume this as well. Therefore, we can analyze the $k$-edges of $X$ using the $k$-edge graph $G_k$ of $X$. Also, since the Lebesgue measure of $Z(f)$ is zero, $X \cap Z(f) = \emptyset$ with probability 1 so we assume this as well. 

As in the proof of \cref{thm:5/4}, we first bound the expected number of $k$-edges \emph{that are in $G_k$}.

We compute the expected number of $k$-edges in $G_k$ by considering two different types of $k$-edges separately. First we bound the expected number of $k$-edges formed by two points in different cells of the partition. Then we bound the expected number of $k$-edges formed by two points in the same cell of the partition. That is, the expected number of $k$-edges in $G_k$ is equal to
\begin{equation}\label{eq:decomp2}
\begin{split}
& \mathbb{E}(\text{number of } k\text{-edges in }G_k \text{ formed by two points in different cells}) \\ 
&+\mathbb{E}(\text{number of } k\text{-edges in }G_k \text{ formed by two points in the same cell}).
\end{split}
\end{equation}
If $\conv(X_1,X_2)$ is a $k$-edge formed by two points $X_1,X_2$ in different cells, then $\conv(X_1,X_2)$ intersects $Z(f)$. 
So to bound the expected number of $k$-edges in $G_k$ formed by two points in different cells, it suffices to bound the expected number of $k$-edges that intersect $Z(f)$.
We claim that the number of points of intersection between $G_k$ and $Z(f)$ is finite. This is true because otherwise some $k$-edge would have to intersect $Z(f)$ infinitely many times. If a degree $r$ algebraic curve intersects a line more than  $r$ times it must contain that line. If this were true, then $Z(f)$ would have to contain the two points of $X$ forming the line, but we are assuming that $X \cap Z(f) = \emptyset$. Therefore, the number of points of intersection between $G_k$ and $Z(f)$ is finite and so we can apply \cref{lem:intersections} to show that the first term in \cref{eq:decomp2} is at most $13nr^2$. 

Now we bound the second term. 
Recall that $Z(f)$ divides $\Rl^2 \setminus Z(f)$ into a family $\mathcal{O}$ of $\Theta(r^2)$ pairwise disjoint open sets (called cells) such that for all cells $O \in \mathcal{O}$, the integrals $\int_{O}W $ are within a factor of 2 of each other.
Therefore, for any fixed $i \neq j$, the probability that $X_i$ and $X_j$  are in the same cell is at most $4/r^2$.

Now, the second term in \cref{eq:decomp2} can be bounded using nearly the same argument which we used to bound the second term in \cref{eq:decomp} in the proof of \cref{thm:5/4}. This argument is given in \cref{eq:cond,eq:int} in the proof of \cref{thm:5/4}. The only change is that for any $X_i,X_j \in X$, the probability $\mathbb{P}(X_1 ,X_2 \text{ are in the same cell})$ is now upper bounded by $4/r^2$ instead of $1/m$. 
Therefore, we have that \cref{eq:decomp2} is at most
\[
13nr^2 + 4\frac{n^2}{r^2} \frac{\sqrt{n-2}}{\sqrt{k}\sqrt{n-2-k}}
\]
and choosing $r^2 = \Big\lfloor \frac{n^{3/4}}{(k+1)^{1/4}(n-2-k)^{1/4}} \Big\rfloor$ makes the above quantity less than \\ $\frac{29n^{7/4}}{(k+1)^{1/4}(n-2-k)^{1/4}}$.
Since we could repeat the argument after rotating the plane 180 degrees, the expected number of $k$-edges is at most two times the bound we just obtained for the expected number of $k$-edges in $G_k$. 
\end{proof}

\section{Tightness of the argument in \texorpdfstring{\cref{thm:basic}}{Theorem \ref{thm:basic}}: Translations of a fixed shape}\label{sec:tight}

In this section we study a natural variation of the $k$-set problem for translations of a fixed convex set on the plane, namely, the number of ways in which one can enclose $k$ points out of a given finite set of points by a translation of a convex set so that its boundary strictly separates them from the rest.
We will show nearly matching upper and lower bounds on the expected number of ways.

For our lower bound, one of our tools will be the uniform convergence theorem of Vapnik and Chervonenkis \cite{VapChe71}.
This introduces a minor technical complication: their theorem is about abstract set systems without regard to whether sets have a boundary, while the standard $k$-set problem for lines on the plane ask for strict separation by a line and therefore the natural choice for our generalization is to ask for strict separation by a curve.

Similarly, the other side of our argument, our upper bound, is a variation on the two-step argument in the proof of \cref{thm:basic} (\cref{eq:BaranySteiger} and the general upper bound of the integrand in \cref{eq:generalbound}), which uses $k$-edges and therefore also uses the boundary curve in a fundamental way.

A \emph{convex body} is a compact convex set with non-empty interior.
A set $C$ is \emph{strictly convex} if for all $x, y \in C$ with $x \neq y$ and for all $\lambda \in (0,1)$ we have $\lambda x + (1-\lambda) y \in \interior C$.
A \emph{set system} is a pair $(X,\mathcal{R})$, where $X$ is a set and $\mathcal{R}$ is a family of subsets of $X$.
The elements of $\mathcal{R}$ are called \emph{ranges}.
For a set $C \subseteq \RR^2$, let $(\RR^2,T_C)$ be the set system of translations of $C$ (that is, $T_C$ is the family of translation of $C$).
We are interested in translations of convex sets and it will be notationally convenient to set $C$ to be the \emph{interior} of a fixed convex body. 
So, for this section, $C$ will be restricted (at least) to be the interior of a convex body.
In this case, when we say that a point lies on the boundary of a range, the point does not lie in the range.
\begin{definition}\label{def:T_C}
For a finite subset $\pset \subseteq \RR^2$, a $T_C$-$k$-set of $\pset$ is a subset $T \subseteq \pset$ of $k$ points such that for some $Q \in T_C$, $\pset \cap \bd Q = \emptyset$ and $T = \pset \cap Q$.
\end{definition}

\subsection{Upper bound for \texorpdfstring{$T_C$-$k$}{TC-k}-sets, probabilistic, \texorpdfstring{$k$}{k} proportional to \texorpdfstring{$n$}{n}}\label{sec:strictlyconvex}

This section establishes our upper bound on the expected number of $T_C$-$k$-sets of a set of $n$ iid points when $C$ is the interior a strictly convex body and $k$ is proportional to $n$.
So for \cref{sec:strictlyconvex}, let $C \subseteq \RR^2$ be the interior of a strictly convex body.

\begin{definition}
A set of points in $\RR^2$ is in \emph{general position relative to $C$} if no three points lie on the boundary of some range in $T_C$ (i.e., some translation of $C$).
\end{definition}

\begin{lemma}\label{lem:edges}
Let $(p,q)$ be a pair of distinct points in $\RR^2$.
Then there are at most two ranges $x+C$ satisfying $p, q \in \bd(x+C)$.
\end{lemma}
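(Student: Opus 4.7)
The plan is to reformulate the statement as a problem about chords of $\bd C$ and then use a parallelogram argument exploiting strict convexity.

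Set $v := p - q$, which is nonzero since $p \neq q$. The condition $p, q \in \bd(x+C)$ is equivalent to $p - x, q - x \in \bd C$, and these two points differ by $v$. So the ranges $x+C$ we want are in bijection with elements $a$ of $A := \{a \in \bd C : a + v \in \bd C\}$, via $a := q - x$. The task is therefore to show $|A| \le 2$.

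Assuming for contradiction that $A$ contains three distinct points $a_1, a_2, a_3$, I would first note that for each $i$, the line $L_i$ through $a_i$ in direction $v$ meets $\bd C$ in at least the two distinct points $a_i, a_i+v$; since $C$ is strictly convex, any line meets $\bd C$ in at most two points, so $L_i \cap \bd C = \{a_i, a_i+v\}$ exactly. It follows that the three lines $L_1, L_2, L_3$ are pairwise distinct (if two coincided, either two of the $a_i$ would coincide or we would conclude $v = 0$). Parameterizing the parallel family of lines in direction $v$ by a coordinate orthogonal to $v$, I may WLOG assume that $L_2$ lies strictly between $L_1$ and $L_3$.

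The key geometric step is to consider the parallelogram $P$ with vertices $a_1, a_1+v, a_3+v, a_3$; it is non-degenerate since $a_1 \neq a_3$ and $v \neq 0$, and $P \subseteq C$ by convexity. The two ``slanted'' sides $[a_1, a_3]$ and $[a_1+v, a_3+v]$ connect distinct points of $\bd C$, so by strict convexity their relative interiors lie in $\interior C$. Since $L_2$ is parallel to $v$ and strictly between $L_1$ and $L_3$, it crosses both slanted sides transversally, meeting each at a single interior point; call these $l$ and $r$. Then $l, r \in \interior C$, and a direct calculation with the parallelogram (using that opposite sides are translates by $v$) shows $r - l = v$, hence $|r - l| = |v|$.

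To finish, I would use that $L_2 \cap \bd C = \{a_2, a_2+v\}$, which forces $L_2 \cap \interior C$ to be the open segment $(a_2, a_2+v)$ of length $|v|$; both $l$ and $r$ must lie in this open segment, forcing $|r - l| < |v|$, a contradiction. The main subtleties will be cleanly invoking strict convexity at two points---that a line meets $\bd C$ in at most two points and that open segments between distinct boundary points lie in $\interior C$---but both follow directly from the definition given in the excerpt.
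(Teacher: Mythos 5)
Your proof is correct. It shares its setup with the paper's proof --- both reduce the statement to the claim that there cannot be three distinct points $a_1,a_2,a_3\in\bd C$ with $a_i+v\in\bd C$ for a fixed $v\neq 0$, i.e.\ three parallel chords of $C$ in direction $v$ all of length $\norm{v}$ --- but the two arguments finish differently. The paper rotates so that $v$ is vertical, considers the chord-length function $f(x_1)=\length\bigl(C\cap(\{x_1\}\times\RR)\bigr)$, and invokes the fact that $f$ is strictly concave on the interval where it is positive, so it cannot take the value $\norm{v}$ three times. You instead prove the needed instance of that concavity from scratch: taking the middle line $L_2$ and the parallelogram with vertices $a_1,a_1+v,a_3+v,a_3$, you exhibit two points $l,r=l+v$ of $L_2\cap\interior C$ at distance exactly $\norm{v}$, which is incompatible with $L_2\cap\cl C=[a_2,a_2+v]$ having length $\norm{v}$. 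This is a more elementary and self-contained route, since strict concavity of the chord-length function (which the paper asserts without proof) itself requires an argument of roughly this kind; the paper's version is shorter by delegating that fact. One cosmetic point: since $C$ is open and the vertices of your parallelogram lie on $\bd C$, the inclusion you want is $P\subseteq\cl C$ rather than $P\subseteq C$; this does not affect the argument, as you only use that the relative interiors of the slanted sides lie in $\interior C$, which is exactly the paper's definition of strict convexity applied to $\cl C$.
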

\begin{proof}
Up to a rotation we can assume that $r:=q-p$ is vertical.
Suppose for a contradiction that there are three ranges $x+C$ satisfying $p, q \in \bd(x+C)$.
This implies there are three points $p_1$, $p_2$, $p_3$ such that $p_1, p_2, p_3, p_1+r, p_2+r,p_3+r \in \bd C$.
Let $f(x_1)$ denote the length of segment ``$C$ intersected with the vertical line at $(x_1, 0) \in \RR^2$.''
Function $f$ is positive in a non-empty interval $(a,b)$, is strictly concave in $[a,b]$ and takes value $\norm{r}$ at three points in $[a,b]$.
But there is no such function so this is a contradiction.
\end{proof}
From the lemma we conclude: 
\begin{corollary}\label{cor:functionC}
Let $V \subseteq (\RR^2)^2$ be the set of pairs of distinct points that can appear on the boundary of some range. 
Then there exists a continuous onto function $\fC: V \to T_C$.
\end{corollary}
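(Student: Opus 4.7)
I plan to construct $\fC$ as a continuous selection from the multifunction $M(p,q) := \{x + C \in T_C : p, q \in \bd(x+C)\}$, which has at most two elements by \cref{lem:edges}. Given an ordered pair $(p,q) \in V$, set $v = q - p$, $u = v/\|v\|$, and let $w = u^\perp$ be the $90^\circ$ counterclockwise rotation of $u$. Define $\fC(p,q)$ to be the translate in $M(p,q)$ whose translation vector $x$ has the smaller value of $x \cdot w$, taking the unique element when $|M(p,q)| = 1$.

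The central claim is that, when $|M(p,q)| = 2$, the two projections $x_1 \cdot w$ and $x_2 \cdot w$ are distinct. To see this, set $b_i^p := p - x_i$ and $b_i^q := q - x_i$, so $b_i^p, b_i^q \in \bd C$ with $b_i^q - b_i^p = v$ for $i = 1,2$, while $x_1 - x_2 = b_2^p - b_1^p$. If $(x_1 - x_2) \cdot w = 0$, then $b_2^p - b_1^p$ is parallel to $v$, so the four points $b_1^p, b_2^p, b_1^q, b_2^q$ all lie on a common line in the direction of $v$. Since $C$ is strictly convex, $\bd C$ meets any line in at most two points; noting that $b_1^p \neq b_2^p$ (the two translates are distinct) and $b_i^p \neq b_i^q$ (since $p \neq q$), the four points are genuinely distinct, giving a contradiction.

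Continuity of $\fC$ then follows. On the interior of $V$ (where $|M|=2$), the implicit function theorem applied to the equations defining the pairs $(b_1^p, b_2^p)$ on $\bd C$ shows that $x_1$ and $x_2$ depend continuously on $(p, q)$; the projections $x_1 \cdot w, x_2 \cdot w$ vary continuously with $(p,q)$ and remain distinct by the claim above, so the ``smaller projection'' rule produces a continuous choice. At degenerate $(p,q)$ on the boundary of $V$ the two translates merge into the unique element of $M(p,q)$, and by the strict concavity argument in the proof of \cref{lem:edges} the selection extends continuously across this locus.

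For ontoness, fix $y + C \in T_C$ and choose any distinct $p, q \in \bd(y+C)$; then $y + C \in M(p,q) = M(q,p)$, and by the distinctness claim the two projections differ, so $y + C$ realizes either the smaller or the larger projection in $M(p,q)$. In the first case $\fC(p,q) = y + C$; in the second, swapping to the ordered pair $(q,p)$ replaces $u$ by $-u$ and $w$ by $-w$, reversing the ordering of projections, so $\fC(q,p) = y + C$. Either way, $y + C$ lies in the image of $\fC$. The main technical point, and the only real obstacle, is the distinctness claim for the two projections, which rests entirely on strict convexity of $C$.
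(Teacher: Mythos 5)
Your construction is essentially the paper's: both break the tie between the at most two candidates from \cref{lem:edges} by a rule depending on the orientation of the ordered pair $(p,q)$ (the paper maximizes $\area\bigl(C_i \cap \aff(p,q)^+\bigr)$, which is effectively the same comparison as your projection $x_i \cdot w$), and your distinctness claim correctly supplies, via strict convexity, the uniqueness that the paper's ``unique maximizer'' phrasing leaves implicit. The only step I would not accept as written is the appeal to the implicit function theorem for continuity, since $\bd C$ is not assumed smooth --- though the paper's own proof offers no continuity argument at all, so this is a refinement of, not a departure from, its approach.
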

\begin{proof}
Every pair in $V$ can appear on the boundary of one or two ranges.
When a pair appears on the boundary of exactly one range, map both orderings of the pair to that range.
When a pair $(p,q)$ appears on the boundary of two ranges, let $C_1$ and $C_2$ be the two translations of $C$ containing $p$ and $q$ on their boundaries.
Let $\fC(p,q)$ be the unique translation that solves $\max_{i\in \{1, 2\}} \area(C_i \cap \aff(p,q)^+)$ (where $\aff(p,q)^+ = \{r \suchthat -(q-p)_x (r-p)_y + (q-p)_y (r-p)_x > 0 \}$).
\end{proof}

From now on we let $\fC(\cdot, \cdot)$ denote the function given by \cref{cor:functionC} (with a slight abuse of notation).
\begin{definition}
For a set of points $\pset$ in general position relative to $C$, let a (oriented) \emph{$T_C$-$k$-edge} be an ordered pair of points $(p,q) \in V$ with $p,q \in \pset$ ($p\neq q$) such that $\fC(p,q)$ contains $k$ points of $\pset$.\details{general position implies boundary contains only $p$, $q$}
\end{definition}

We now show a bound relating $T_C$-$k$-sets and $T_C$-$k$-edges, which follows from a variation of known continuous deformation arguments \cite[Lemma 5.15]{Sariel}, \cite[Chapter 11]{Matousek}.
For a finite set of points $\pset$ in general position relative to $C$, let $e_k(\pset)$ be the number of $T_C$-$k$-edges of $\pset$ and let $a_k(\pset)$ be the number of $T_C$-$k$-sets of $\pset$.
\begin{lemma}\label{lem:shp} 
For a finite set of points $\pset$ in general position relative to $C$ and $k \geq 2$ we have $a_k(\pset) \leq 4\bigl(e_{k-2}(\pset) + e_{k-1}(\pset) + e_{k}(\pset)\bigr)$.
\end{lemma}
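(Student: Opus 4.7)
The plan follows a continuous deformation argument in the 2-parameter space of translations of $C$, analogous to the classical one for linear $k$-sets. Consider the arrangement $\mathcal{A}$ in $\RR^2$ induced by the $n$ closed curves $\{p - \bd C : p \in \pset\}$. By \cref{lem:edges} each pair of these curves meets in at most two points, and general position relative to $C$ ensures that no three of them share a common point, so every vertex of $\mathcal{A}$ involves exactly two points of $\pset$. On each cell of $\mathcal{A}$ the set $Q_x \cap \pset$ is constant, and crossing an arc $p - \bd C$ adds or removes $p$.

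Given a $T_C$-$k$-set $T$ with witness $Q_{x_0} = x_0 + C$, let $R(T) = \{x \in \RR^2 : Q_x \cap \pset = T\}$. Since $T \neq \emptyset$ (as $k \geq 2$), $Q_x \cap \pset = \emptyset$ for $\|x\|$ sufficiently large, so $R(T)$ is a non-empty, bounded, open union of cells of $\mathcal{A}$. Fix a connected component $R_0$ of $R(T)$. Assuming for the moment that $\partial R_0$ contains a vertex $x^*$ of $\mathcal{A}$, two points $p, q \in \pset$ lie on $\bd Q_{x^*}$. Since $Q_{x^*}$ is open and $Q_x \cap \pset = T$ for $x$ just inside $R_0$ near $x^*$, and since at a vertex no point of $\pset\setminus\{p,q\}$ lies on $\bd Q_{x^*}$, continuity gives $Q_{x^*} \cap \pset = T \setminus \{p, q\}$, a set of cardinality $j := k - |T \cap \{p, q\}| \in \{k-2, k-1, k\}$. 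By \cref{cor:functionC}, the orientation of the pair for which $\fC$ equals $Q_{x^*}$ produces an oriented $T_C$-$j$-edge associated to $(T, R_0)$.

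For the counting, observe that each vertex $x^*$ of $\mathcal{A}$ is incident to at most four cells, whose contents are the four sets $(Q_{x^*} \cap \pset) \cup T'$ for $T' \subseteq \{p, q\}$; hence at most four $(T, R_0)$ pairs can map to the same oriented edge under the construction above. Since $a_k(\pset)$ is at most the number of pairs $(T, R_0)$ (each $T$ has at least one component of $R(T)$), and since by \cref{cor:functionC} the total number of oriented $T_C$-$j$-edges equals the number of vertices of $\mathcal{A}$ with $|Q_{x^*} \cap \pset| = j$, summing over $j \in \{k-2, k-1, k\}$ yields the desired inequality $a_k(\pset) \leq 4\bigl(e_{k-2}(\pset) + e_{k-1}(\pset) + e_k(\pset)\bigr)$.

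The main obstacle is the degenerate case in which a component $R_0$ has boundary consisting only of full closed curves $p - \bd C$ disjoint from the rest of $\mathcal{A}$, so that $\partial R_0$ carries no vertex. This can indeed occur, for instance when a boundary point $p$ is so far from the remaining points that $p - \bd C$ is disjoint from every other $q - \bd C$. I would address this either by a small generic perturbation of $\pset$ that preserves both $a_k(\pset)$ and the $e_j(\pset)$ while forcing the relevant pairs of boundary curves to intersect, or by treating such isolated-boundary components directly via a case analysis using the observation that the set $T$ associated to such a component is then tightly constrained by the isolation structure.
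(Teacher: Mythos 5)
Your argument takes a genuinely different route from the paper's. The paper works in the primal: starting from a witness range $C_0$ for a $T_C$-$k$-set $Q$, it translates $C_0$ horizontally until a first point $p\in\pset$ lands on its boundary, then slides the range while keeping $p$ on the boundary until a second point $q$ lands there, and records $(p,q,l_p,l_q)$ with labels $l_p,l_q\in\{IN,OUT\}$ indicating membership of $p,q$ in $Q$; injectivity of this map gives the bound, the factor $4$ coming from the $2^2$ label choices rather than from cell incidences. Your dual picture --- the arrangement of the curves $p-\bd C$ in translation space, $T_C$-$k$-sets as unions of cells, $T_C$-$j$-edges at the vertices --- is a valid reformulation: the identification $Q_{x^*}\cap\pset=T\setminus\{p,q\}$, the appeal to \cref{lem:edges} and \cref{cor:functionC} to orient the pair, and the ``at most four incident cells'' count all go through. (One small imprecision: the number of oriented $T_C$-$j$-edges is at least, not exactly, the number of vertices with $\card{Q_{x^*}\cap\pset}=j$, since a pair lying on only one common range contributes one vertex but two oriented edges; the inequality points the right way, so this is harmless.)

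The genuine gap is the degenerate case, and neither of your proposed fixes works as written. The perturbation route is self-defeating: forcing two previously disjoint curves $p-\bd C$ and $q-\bd C$ to intersect creates new translates with both $p$ and $q$ on their boundary, i.e.\ potentially new $T_C$-$j$-edges, so the right-hand side quantities $e_j(\pset)$ are \emph{not} preserved and the inequality for the perturbed set does not transfer back. What you are missing is that the degenerate case is vacuous for $k\ge 2$. Suppose $p-\bd C$ is disjoint from $q-\bd C$. The connected curve $p-\bd C$ is covered by the two open sets $\{x: q\in x+C\}$ and $\{x: q\notin \cl(x+C)\}$, and it is not contained in the first: taking $y_0\in\bd C$ at which $C$ has a support line with outer normal $(q-p)/\norm{q-p}$ and $x_0=p-y_0$, the translate $x_0+C$ has $p$ on its boundary and $\cl(x_0+C)$ in the closed halfplane through $p$ that excludes $q$. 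Hence \emph{every} translate with $p$ on its boundary has $q$ outside its closure. So if an isolated curve $p-\bd C$ lies in $\partial R_0$, letting $x\in R_0$ tend to it would place every $q\in T\setminus\{p\}$ in the closure of such a translate --- impossible --- whence $T\subseteq\{p\}$ and $\card{T}\le 1<k$. With that observation supplied in place of your last paragraph, your proof is complete.
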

\begin{proof}
To prove the claim we will construct an injective function $f$ from $T_C$-$k$-sets of $\pset$ to a labelled extension of the set of $T_C$-$k$-edges.
The function is defined as follows: 
Let $Q \subseteq \pset$ be a $T_C$-$k$-set induced by some range $C_0$.
Translate $C_0$ in the $x$ direction until some point $p \in \pset$ lies on its boundary to obtain range $C'$, then translate $C'$ while keeping $p$ on its boundary (letting $p$ slide along the boundary) until another point $q \in \pset$ lies on the boundary to obtain a range $C''$ (there may be more than one choice here, pick arbitrarily).
We have that $\cl C''$ contains $Q$ and at most two other points from $\pset$.
From the general position assumption, $\bd C''$ contains exactly two points from $S$. 
Swap points $p, q$ if needed so that $C'' = \fC(p,q)$.
Pick labels $l_p, l_q \in \{IN, OUT\}$ for $p$ and $q$ according to whether they are in $Q$.
This completes the definition of an $f$ from $T_C$-$k$-sets of $\pset$ to $\pset^2 \times \{IN, OUT\}^2$, namely 
$f(Q) = (p,q,l_p, l_q)$.


We now show that $f$ is injective.
Let $Q$, $Q'$ be two $T_C$-$k$-sets of $\pset$ induced by ranges $C_0$, $C_0'$, respectively, and so that $f(Q)=f(Q')=(p,q,l_p,l_q)$.
By definition of $f$ we have that $Q$ is equal to $\pset \cap \fC(p,q)$ with $p$ and $q$ added according to the labels.
But then by definition of $f$ we have that $Q'$ is also equal to that set and therefore equal to $Q$.
This establishes that $f$ is injective.

To conclude, the image of $f$ contains only pairs $(p,q)$ that are $T_C$-$r$-edges of $\pset$ for $r \in \{k-2, k-1, k\}$. 
The claim follows.
\end{proof}

\begin{assumption}\label{assumption:mu}
Given $C$, probability distribution $\pdist$ on $\RR^2$ is such that $\pdist\bigl(\bd(x+C)\bigr)=0$ for all $x\in \RR^2$.
\end{assumption}
(In particular the assumption on $\pdist$ holds if $\pdist$ has a density.)

\begin{proposition}\label{prop:distinct}
Let $\pdist$ be a Borel probability distribution satisfying \cref{assumption:mu}.
Let $Y,Z$ be a pair of iid points, each according to $\pdist$.
Then $Y\neq Z$ a.s.
\end{proposition}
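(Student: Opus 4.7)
The plan is to reduce the statement to the claim that $\pdist$ has no atoms, and then conclude by Fubini.

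First I would argue that \cref{assumption:mu} forces $\pdist$ to have no atoms. Since $C$ is the interior of a convex body, $\bd C$ is nonempty, so pick any $b \in \bd C$. For an arbitrary point $p \in \RR^2$, let $x = p - b$; then $p = x+b \in x + \bd C = \bd(x+C)$ (translation commutes with taking the boundary). Hence $\{p\} \subseteq \bd(x+C)$, and by \cref{assumption:mu} we have
\[
\pdist(\{p\}) \leq \pdist\bigl(\bd(x+C)\bigr) = 0.
\]
So every singleton has $\pdist$-measure zero.

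Next I would use Fubini's theorem (applicable because $\pdist$ is a Borel probability measure on $\RR^2$, so $\pdist \times \pdist$ is well-defined and the diagonal is a Borel set) to compute
\[
\pr(Y = Z) = \int_{\RR^2} \int_{\RR^2} \one_{\{y=z\}} \, d\pdist(z)\, d\pdist(y) = \int_{\RR^2} \pdist(\{y\}) \, d\pdist(y) = 0,
\]
where the last equality uses the no-atom conclusion from the previous step. Therefore $Y \neq Z$ almost surely.

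The only mild subtlety is the first step: one must observe that \cref{assumption:mu} is not literally a statement about singletons but about translates of $\bd C$, and that these translates cover every point of $\RR^2$ because $\bd C$ is nonempty. Once this is noted, the rest is routine measure theory and I do not expect any real obstacle.
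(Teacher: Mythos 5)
Your proof is correct and rests on the same key observation as the paper's: every point of $\RR^2$ lies on the boundary of some translate of $C$, so \cref{assumption:mu} kills the diagonal. The paper phrases the final computation via conditioning on $Z$ (bounding $\pr(Y=Z\giventhat Z)$ by $\pr\bigl(Y\in\bd(Z-b+C)\bigm| Z\bigr)=0$) rather than making the no-atoms statement explicit and invoking Fubini, but this is the same argument in different notation.
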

\begin{proof}
Fix a point $b$ on the boundary of $C$ (so that the origin is on the boundary of $-b+C$).
Note that $\pr(Y = Z) = \e\bigl( \pr(Y = Z \giventhat Z )\bigr) \leq \e\Bigl( \pr \bigl(Y \in \bd(Z-b+C) \bigm| Z \bigr) \Bigr) = 0$.
\end{proof}

\begin{proposition}\label{prop:gp}
Let $\pdist$ be a Borel probability distribution satisfying \cref{assumption:mu}.
Let $X$ be a random set of $n$ iid points, each according to $\pdist$.
Then $X$ is in general position relative to $C$ a.s.
\end{proposition}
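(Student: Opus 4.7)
The plan is to show that for any fixed triple of indices $i<j<k$, the probability that $X_i,X_j,X_k$ all lie on the boundary of some common translate of $C$ is zero, and then take a union bound over the $\binom{n}{3}$ triples.

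First I would condition on the pair $(X_i, X_j)$. By \cref{prop:distinct}, $X_i \neq X_j$ almost surely, so I may assume this. By \cref{lem:edges}, once $X_i$ and $X_j$ are fixed and distinct, there are at most two ranges $x_1 + C, x_2 + C \in T_C$ whose boundary contains both of them (and possibly zero, if $(X_i,X_j) \notin V$). Hence, conditional on $(X_i,X_j)$, the set of points $z \in \RR^2$ such that $\{X_i, X_j, z\}$ lies on the boundary of some common translate of $C$ is contained in $\bd(x_1+C) \cup \bd(x_2+C)$.

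By \cref{assumption:mu}, $\pdist\bigl(\bd(x_\ell + C)\bigr) = 0$ for $\ell \in \{1,2\}$. Since $X_k$ is independent of $(X_i,X_j)$ and distributed according to $\pdist$, the conditional probability that $X_k$ lies in this exceptional set is zero. Applying the tower property, the unconditional probability that $X_i,X_j,X_k$ all lie on the boundary of a common translate of $C$ is zero. A union bound over the finitely many triples $\{i,j,k\} \subseteq \{1,\dotsc,n\}$ yields that, almost surely, no three points of $X$ lie on the boundary of a common translate of $C$, i.e., $X$ is in general position relative to $C$. No step here presents a real obstacle; the only subtlety is using \cref{lem:edges} to reduce the locus of ``bad'' third points to a finite union of boundary curves of translates of $C$, which is precisely where the strict convexity built into \cref{lem:edges} is used.
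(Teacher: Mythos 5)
Your argument is correct and is essentially the paper's own proof: reduce to a single triple, condition on the first two points (distinct a.s.\ by \cref{prop:distinct}), use \cref{lem:edges} to confine the bad third point to the boundaries of at most two translates, and apply \cref{assumption:mu} with the tower property. The paper phrases the two candidate translates via the function $\fC$ from \cref{cor:functionC} rather than citing \cref{lem:edges} directly, but this is only a cosmetic difference.
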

\begin{proof}
It is enough to prove the claim for $n=3$.
Let $Y,Z,W$ be three iid random points according to $\pdist$.
By \cref{prop:distinct}, $Y \neq Z$ a.s.
Then 
\begin{align*}
&\pr \bigl((\exists a)Y,Z,W \in \bd(a+C) \bigr)\\
&=  \pr \bigl(Y\neq Z, (\exists a)Y,Z,W \in \bd(a+C) \bigr) \\
&= \e\Bigl( \pr \bigl( (\exists a)Y,Z,W \in \bd(a+C) \bigm| Y,Z \bigr) \Bigm| Y \neq Z \Bigr) \\
&= \e\Bigl( \pr \bigr( W \in \bd \fC(Y,Z)\text{ or }  W \in \bd \fC(Z,Y) \bigm| Y,Z \bigl) \Bigm| Y \neq Z\Bigr) \\ 
&= 0.\qedhere
\end{align*}
\end{proof}

\begin{theorem}[$T_C$-$k$-set/edge upper bound, probabilistic, $k$ proportional to $n$]\label{thm:halvingaverage}
Let $c \in (0,1)$.
Let $\pdist$ be a Borel probability distribution satisfying \cref{assumption:mu}.
Let $X$ be a random set of $n$ iid points, each according to $\pdist$.
Let $A_n$ (resp. $E_n$) be the number of $T_C$-$k$-sets (resp. $T_C$-$k$-edges) of $X$ for $k =\floor{cn}$.
Then
\[
\e(E_n) \leq O(n^{3/2})
\]
and
\[
\e(A_n) \leq O(n^{3/2})
\]
(where the constants in big-O depend only on $c$).
\end{theorem}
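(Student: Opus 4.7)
The plan is to mimic the two-step argument used in the proof of \cref{thm:basic}, with oriented $T_C$-$k$-edges playing the role of $k$-facets and the function $\fC$ playing the role of $\aff(\cdot,\cdot)^+$, and then to bootstrap from the $T_C$-$k$-edge bound to the $T_C$-$k$-set bound via \cref{lem:shp}.

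First I would set up an exact formula for $\e(E_n)$ analogous to \cref{eq:BaranySteiger}. Since an oriented pair $(X_i,X_j)$ with $i\neq j$ is a $T_C$-$k$-edge iff $\fC(X_i,X_j)$ contains exactly $k$ points of $X\setminus\{X_i,X_j\}$, conditioning on $(X_i,X_j)$ the remaining $n-2$ points are iid from $\pdist$, so by \cref{prop:gp} and \cref{assumption:mu} the conditional indicator of ``exactly $k$ points inside $\fC(X_i,X_j)$'' is binomial with parameter $T := \pdist(\fC(X_i,X_j))$. Writing $G_\pdist(t) := \pr(T\le t)$ and using linearity of expectation over the $n(n-1)$ ordered pairs yields
\begin{equation*}
\e(E_n) = n(n-1)\binom{n-2}{k}\int_0^1 t^k(1-t)^{n-2-k}\,\ud G_\pdist(t).
\end{equation*}

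Next I would apply the pointwise inequality $t^k(1-t)^{n-2-k}\le\bigl(\tfrac{k}{n-2}\bigr)^k\bigl(\tfrac{n-2-k}{n-2}\bigr)^{n-2-k}$, exactly as in \cref{eq:generalbound} and \cref{eq:int}, and then Stirling's approximation. This gives
\begin{equation*}
\binom{n-2}{k}\bigl(\tfrac{k}{n-2}\bigr)^k\bigl(\tfrac{n-2-k}{n-2}\bigr)^{n-2-k} \;\le\; \frac{\sqrt{n-2}}{\sqrt{k}\sqrt{n-2-k}}.
\end{equation*}
For $k=\floor{cn}$ with $c\in(0,1)$, both $k$ and $n-2-k$ are $\Theta(n)$, so this quantity is $O(1/\sqrt{n})$ with a constant depending only on $c$. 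Combining the two displays gives $\e(E_n) = O(n^{3/2})$.

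For the $T_C$-$k$-set bound, I would invoke \cref{lem:shp}: since by \cref{prop:gp} the sample $X$ is in general position relative to $C$ almost surely, $A_n \le 4(E_n^{(k-2)} + E_n^{(k-1)} + E_n^{(k)})$ a.s., where $E_n^{(j)}$ denotes the number of $T_C$-$j$-edges of $X$. Each of $k-2,k-1,k$ is of the form $\floor{c'n}$ for some $c'\in(0,1)$ (for $n$ large enough, with constants depending on $c$), so taking expectations and applying the $T_C$-$j$-edge bound to each term yields $\e(A_n) \le O(n^{3/2})$.

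The main obstacle is really the careful setup of the integral identity, since unlike the hyperplane case the region $\fC(X_i,X_j)$ is only defined on the event $X_i\neq X_j$ and the orientation choice in \cref{cor:functionC} must be invoked to make the map well defined; once this and the general-position issue are handled via \cref{prop:distinct,prop:gp}, the remaining analytic estimates are identical to those in the proofs of \cref{thm:basic,thm:5/4}.
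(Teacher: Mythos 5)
Your proposal is correct and follows essentially the same route as the paper's proof: the same integral identity $\e(E_n)=n(n-1)\binom{n-2}{k}\int_0^1 t^k(1-t)^{n-2-k}\,\ud G_P(t)$ over ordered pairs (with $T=\pdist(\fC(X_1,X_2))$ made well defined via \cref{prop:distinct,prop:gp}), the same pointwise maximization of the integrand plus Stirling to get $O(n^{-1/2})$, and the same appeal to \cref{lem:shp} for the $T_C$-$k$-set bound. No gaps.
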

\begin{proof}
Let $\fC(p,q)$ and $V$ be as in \cref{cor:functionC}.
From \cref{prop:gp}, $X$ is in general position relative to $C$ a.s.

Let $X = \{ X_1, \dotsc, X_n \}$.
Let $T =\pdist\bigl(\fC(X_1, X_2)\bigr)$ with the additional convention that $\fC(p,q)=\emptyset$ if $(p,q) \notin V$.
In this way $T$ is defined whenever $X_1 \neq X_2$, that is, a.s.\ by \cref{prop:distinct}.
Let $G_P(t) = \pr (T \leq t)$ for $t \in \RR$. 
Using a variation of \cref{eq:BaranySteiger} and the argument in the proof of \cref{thm:5/4} we get:
\begin{align*}
&\pr \bigl( \text{$(X_1,X_2)$ is a $T_C$-$k$-edge of $X$} \bigr) \\
&= \pr \bigl(\card{\fC(X_1, X_2) \cap (X \setminus \{X_1, X_2\})} = k \bigr) \\
&= \e \Bigl(\pr \bigl( \card{\fC(X_1, X_2) \cap (X \setminus \{X_1, X_2\})} = k \bigm| X_1, X_2 \bigr) \Bigr) \\
&= \binom{n-2}{k} \e \bigl(T^k (1-T)^{n-2-k} \bigr) \\
&= \binom{n-2}{k} \int_0^1 t^k (1-t)^{n-2-k} \ud G_P(t)
\end{align*}
and
\begin{align*}
\e(E_n) &= n(n-1) \binom{n-2}{k} \int_0^1 t^k (1-t)^{n-2-k} \ud G_P(t) \\
&\leq n^2 \binom{n-2}{k} \bigg( \frac{k}{n-2} \bigg)^k  \bigg(\frac{n-2-k}{n-2} \bigg)^{n-2-k} \\
&\leq n^2 \frac{\sqrt{n-2}}{\sqrt{k}\sqrt{n-2-k}} \\
&\leq O(n^{3/2}).
\end{align*}
This proves the first inequality.
From this, the second inequality is immediate using \cref{lem:shp}.
\end{proof}



\subsection{Lower bound for \texorpdfstring{$T_C$-$k$}{TC-k}-sets, deterministic, \texorpdfstring{$k$ proportional to $n$}{k proportional to n}}

In this section we show an $\Omega(n^2)$ lower bound on the maximum number of $T_C$-$k$-sets of a broad family of set systems of the form $(\RR^2, T_C)$ for $k$ proportional to $n$.
We illustrate the main idea of the argument in \cref{prop:square} for the case where $C$ is a unit square.
In \cref{thm:c2cross}, we then use the argument for the case where $C$ is the interior of a convex body with $C^2$ boundary (actually, slightly more general than that).

While the sets of points in the following results may not be in general position, this is not a true weakness of the results.
The reason is that, like in the case of the standard $k$-set problem for lines, the number of $T_C$-$k$-sets of a given set of points cannot decrease by applying any sufficiently small perturbation to the points, because any range inducing a $T_C$-$k$-set must by definition contain no point on its boundary.
Thus, the maximum number of $T_C$-$k$-sets among set in general position is no smaller than the number of $T_C$-$k$-sets among unrestricted sets of points.

To build intuition on an $\Omega(n^2)$ lower bound, we first consider in \cref{prop:square} the case of translations of a square and a set of $n$ equally spaced points arranged  in the shape of a cross (or plus sign, $+$).

\begin{proposition}\label{prop:square}
Let $C$ be the open unit square. 
Let $0<c<c'<1$.
Then for $cn \leq k \leq c'n$ we have $\max_{\card{\pset}=n} a_k(\pset) = \Omega(n^2)$
(where the constants in $\Omega$ depend only on $c$ and $c'$).
\end{proposition}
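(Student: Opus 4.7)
The plan is to construct, for each $k$ with $cn \le k \le c'n$, an $n$-point set $\pset$ shaped as a symmetric cross whose two perpendicular arms are calibrated so that a sliding unit square captures exactly $k$ points throughout a 2D window of $\Omega(n^2)$ combinatorially distinct positions. The key observation is that once the arm spacing is set to $\delta := 2/k$ and each arm is strictly longer than $1$, sliding the square independently in $x$ and $y$ keeps the captured count constantly equal to $k/2$ per arm while producing $\Omega(n)$ distinct 1D index ranges in each direction; the two 1D slides then multiply to $\Omega(n^2)$ distinct 2D subsets.

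First I would place $n/2$ equally spaced horizontal cross points on the $x$-axis with spacing $\delta = 2/k$, symmetrically around the origin but with a half-step offset so that no point lies on the $y$-axis, and $n/2$ analogous vertical cross points on the $y$-axis (adjusting by $\pm 1$ point between arms if $k$ is odd). With this choice each arm has total span $\tfrac{n\delta}{2} = \tfrac{1}{c}$, which strictly exceeds $1$ since $c < 1$. Let $Q(x, y) := (x, x+1) \times (y, y+1)$ and let $R$ be the open window of $(x, y)$ for which $Q(x, y)$ sits strictly inside each arm's span and still contains the origin in its interior; then $R$ is an open square of side length $\tfrac{1}{c} - 1 = \Omega(1)$ with constant depending on $c'$.

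Next I would verify that for each $(x, y) \in R$ outside the negligible exceptional set consisting of the finitely many horizontal and vertical lines where a point of $\pset$ hits $\bd Q(x, y)$, the horizontal cross points inside $Q(x, y)$ form a contiguous range of exactly $k/2$ indices -- the half-step offset forces the count to be exact, not merely approximate, because an open length-$1$ interval meets exactly $k/2$ of the shifted half-integer positions irrespective of its location. An identical statement holds for the vertical arm, so $|Q(x, y) \cap \pset| = k$ and $Q(x, y) \cap \pset$ is a $T_C$-$k$-set. As $x$ crosses each of the $(\tfrac{1}{c} - 1)/\delta = \tfrac{n-k}{2} = \Omega(n)$ threshold values inside the $x$-range of $R$ the captured horizontal index range shifts by exactly one, and analogously for $y$; hence the map $(x, y) \mapsto Q(x, y) \cap \pset$ is constant on each of $\Omega(n^2)$ open cells of $R$ and takes distinct values on distinct cells, yielding $\Omega(n^2)$ distinct $T_C$-$k$-sets.

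The main obstacle will be keeping the captured count exactly equal to $k$ throughout the whole window $R$: this is achieved by the half-step offset of the arm points (so no arm point ever coincides with a boundary segment of the unit square at $(x, y) = (0, 0)$, avoiding a $\pm 1$ parity fluctuation in the count) together with forcing each arm span to strictly exceed $1$ (so the unit square slides strictly inside each arm and picks up the same number of points at every generic position, with the implied constant depending on $c$ and $c'$). The strict-separation clause in the definition of $T_C$-$k$-set is handled routinely by restricting $(x, y)$ to the full-measure complement of the finitely many exceptional lines inside $R$.
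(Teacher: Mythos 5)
Your construction is essentially the paper's: the paper likewise places $n/2$ equally spaced points on each axis to form a cross (spacing $4/n$, arms of span $2$, origin deleted) and slides the unit square over an $\Omega(n)\times\Omega(n)$ grid of positions, treating only $k=n/2-2$ and declaring the rest similar, whereas you calibrate the spacing to $2/k$ to hit general $k$ directly. One small caveat: for odd $k$ your parenthetical fix (shifting a point between arms) does not remove the $\pm 1$ parity fluctuation in the per-arm count --- you should instead adjust the spacings to $1/\lceil k/2\rceil$ and $1/\lfloor k/2\rfloor$ on the two arms --- but this is a detail at the same level of informality as the paper's ``the rest is similar.''
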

\begin{proof}
We show the case where $n$ is a multiple of 4 and $k=n/2 - 2$, the rest is similar.
Consider a set of points equally spaced on the $x$ and $y$ axes forming a cross.
Say, for $\lambda = 4/n$, let $\pset = \lambda \bigl( \ZZ^2 \cap (\text{$x$-axis} \cup \text{$y$-axis}) \cap [-n/4,n/4]^2 \setminus (0,0) \bigr)$.
Then $\card{\pset}=n$ and the claim follows.
\end{proof}

For our next $\Omega(n^2)$ lower bound, \cref{thm:c2cross}, we assume that the boundary of $C$ is well approximated by its unique tangent line at certain points (locally of class $C^2$). 
See \cite[Section 5.1, subsection ``Second-Order Differentiability'']{MR2335496} for definitions and basic facts about differentiability of the boundary of a convex body.
The sets of points are adaptations of the ``cross'' construction from \cref{prop:square}.

\begin{definition}\label{def:C2}
Let $C \subseteq \RR^2$ be the interior of a convex body.
We say that $\bd C$ is $C^2$ at a point $x \in \bd C$ if the following conditions hold:
\begin{enumerate}
\item There is a unique support line $L$ of $C$ at $x$.
\item Apply an invertible affine transformation so that $x$ is at the origin, $L$ is horizontal and $C$ is above $L$. Then the lower side of $\bd C$ is represented in the form $(t,g(t))$ in a neighborhood of $0$ for a convex function $g$ such that there is a function $r(t)=at^2$ with $a>0$ and $g(t) = r(t) + o(t^2)$ as $t \to 0$.
\end{enumerate}
\end{definition}

\begin{theorem}\label{thm:c2cross}
Assume that $C \subseteq \RR^2$ is the interior of a convex body such that
there exist linearly independent unit vectors $u,v \in \RR^2$ and points $a,b,c,d \in \bd C$ such that $\bd C$ is $C^2$ in a neighborhood of $a,b,c,d$ with outer normals $u,v,-u,-v$, resp.
Let $0<c<c'<1$.
Then for $cn \leq k \leq c'n$ we have $\max_{\card{\pset}=n} a_k(\pset) = \Omega(n^2)$
(where the constants in $\Omega$ depend only on $c$, $c'$ and $C$).
\end{theorem}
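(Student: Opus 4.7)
The plan is to adapt the cross construction of \cref{prop:square} to the $C^2$ setting. After an affine change of coordinates I may assume $u=e_1$ and $v=e_2$, so that $a,b,c,d$ are the extremal points of $C$ in the four axis directions. By the $C^2$ hypothesis, in a neighborhood of each of these four points $\bd C$ is well-approximated by a parabola with strictly positive curvature; this parabolic local structure will play the role of the flat sides of the square in the cross argument.

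Given a target $k\in[cn,c'n]$, I would construct $\pset$ by placing about $n/4$ equally spaced points on each of four short segments along the tangent lines at $a,b,c,d$, with arm length and spacing $\delta$ chosen as functions of $n$ and $k$. The spacing is small enough that each arm lies well within the $C^2$-approximation neighborhood of its cap, and the arm length is chosen so that a suitable translate $C+w$ near the origin captures about $k/4$ points on each of the two active arms, yielding the target size $k$.

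The key analysis then takes place in the two-dimensional translation parameter space $(w_1,w_2)$. By the parabolic approximation, the condition that a given $a$-arm point is inside $C+w$ becomes an inequality of the form $w_1 > \alpha(j\delta - w_2)^2$ for the local curvature $\alpha$ at $a$, and analogously for the three other arms; so the locus where any single arm point changes inclusion is a parabolic arc in $(w_1,w_2)$-space. The arrangement of these $O(n)$ parabolas has $O(n^2)$ cells, each corresponding to a specific choice of consecutive interval on each arm and hence to a specific $T_C$-$k$-subset of $\pset$. Adapting the cross count from \cref{prop:square}, I would show that $\Omega(n^2)$ of these cells sit at the target level $k$, yielding the claimed bound.

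The main obstacle is the coupling introduced by the curvature. In the square case the flat sides decouple the two coordinates of $w$: motion in $w_1$ affects only the $x$-arm and motion in $w_2$ only the $y$-arm, yielding two independent sliding families of size $\Omega(n)$ whose product gives $\Omega(n^2)$. In the $C^2$ case a given coordinate of $w$ changes the width of one pair of opposite arm intervals while simultaneously shifting the centers of the orthogonal pair, so the four arms do not slide independently. The core technical step is to show that, at a sufficiently small scale of the construction, the quadratic coupling between the two axes is of lower order than the first-order sliding within each arm, so that the two-dimensional product count from the cross argument survives and still produces $\Omega(n^2)$ distinct $T_C$-$k$-sets.
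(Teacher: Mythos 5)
There is a genuine gap, and it lies exactly where you flagged it: the coupling problem is not a technicality to be estimated away but a structural failure of your construction. By placing the arms \emph{along the tangent lines} at $a,b,c,d$ you destroy the product structure that makes the cross argument work. Concretely, take $u=e_1$: the tangent arm at the rightmost point $a$ lies on a vertical support line of $C$, so its points enter $C+w$ only when $w_1>0$, and then they enter as a symmetric window of width about $2\sqrt{w_1/\alpha_a}$ (your parabolic inequality $w_1>\alpha(j\delta-w_2)^2$); meanwhile the tangent arm at the opposite point $c$ is pushed strictly outside the translated support line for every $w_1>0$ and contributes nothing. So only two arms are ever active, each count is \emph{monotone} in its own translation coordinate, and there is no gain/loss compensation between opposite arms. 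The level set $\{w : \card{\pset\cap(w+C)}=k\}$ is therefore a monotone staircase in the arrangement of your $O(n)$ parabolas, with only $O(n)$ cells, not $\Omega(n^2)$. Your proposed rescue --- show the quadratic coupling is lower order at small scale --- makes this worse, not better: if the coupling is negligible, then within each staircase cell varying the orthogonal coordinate does not change the induced set at all, and you are left with $O(n)$ distinct $T_C$-$k$-sets. To extract $\Omega(n)$ distinct sets per count level from the center-shifting coupling you would need the shift across one count level, which is of order $\alpha j\delta^2$, to exceed the point spacing $\delta$, forcing the arm length to be $\Omega(1/\alpha)$ --- a constant, outside the regime where the $o(t^2)$ error in \cref{def:C2} is controlled.

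The paper's proof avoids all of this by orienting the arms along the \emph{normals} rather than the tangents: $s(p)=\conv\{v(p)-tp,\,v(p)+tp\}$ for $p\in\{\pm e_1,\pm e_2\}$, with $n/4$ points per arm and half of each arm initially inside $C$. Then translating by one grid step $\eps$ in the $e_1$ direction moves one point of the $e_1$-arm into $C$ and one point of the $-e_1$-arm out (count preserved, set changed), exactly as in \cref{prop:square}, while the $\pm e_2$-arms are unaffected because the boundary near $v(\pm e_2)$ deviates from its horizontal tangent by at most $\alpha t^2$, which is made smaller than the half-gap $\eps/2>4t/n$ by choosing $t\leq\min\{t_M,4/(\alpha n)\}$. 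This is where the $C^2$ hypothesis is actually used: not to produce parabolic arcs in parameter space, but to certify that small translations in one axis direction are invisible to the orthogonal pair of arms. The two sliding families are then genuinely independent and the grid of $n^2/16$ translations yields $n^2/16$ distinct $T_C$-$(n/2)$-sets. If you want to salvage your write-up, replace ``segments along the tangent lines'' by ``segments along the normals'' and the rest of the cross argument goes through essentially verbatim; the arrangement-of-parabolas machinery is not needed.
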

\begin{proof}
We show the case where $n$ is a multiple of 8 and $k=n/2$, the rest follows easily.
Up to an invertible linear transformation, we can assume $u=e_1$ and $v=e_2$, without loss of generality.
Let $U = \{e_1, e_2, -e_1, -e_2 \}$ and for $p \in U$ let $v(p)$ be a locally $C^2$ point on the boundary of $C$ and having outer normal $p$.

For $p \in U$ and some $t>0$, consider the segment of length $2t$ perpendicular to the boundary of $C$ at $v(p)$ and centered at $v(p)$, namely $s(p) := \conv\{v(p)-tp, v(p)+tp\}$.
Finally, consider a (one-dimensional) grid $g(p)$ of $n/4$ equally spaced points on segment $s(p)$.
Let our set of points be $\pset = \cup_{p \in U} g(p)$.
By construction $\card{\pset \cap C} = n/2$.
Let $\eps := 2t/(n/4-1)$ be the gap between consecutive points in each segment.

We will now show that we can choose $t>0$ small enough so that there are $\Omega(n^2)$ small translation of $C$ that induce different subsets of $\pset$, each containing $n/2$ points.
The idea of the argument is to translate $C$ independently in the vertical and horizontal direction, to pick $\Omega(n)$ different subsets of $n/4$ points among the pair of vertical segments and similarly for the horizontal segments.
The translations, notated $p+C$ and parameterized by $p$, form the following grid around the origin: $G := \{ (k\eps,l\eps) \in \RR^2 \suchthat k,l \in \ZZ \cap [-n/8,n/8]\}$.
By Taylor's theorem and compactness there exist constants $\alpha > 0, t_M>0$ (that depend only on $C$) such that the boundary of $C$ has a $C^2$ parametrization $y=\phi(x)$ in a neighborhood of $(x_0, y_0) := v(-e_2)$ such that $\abs{\phi(x)-y_0} \leq \alpha (x-x_0)^2$ for $x \in [x_0-t_M, x_0+t_M]$ (and similarly for $v(e_1), v(-e_1), v(e_2)$).
In particular, $\abs{\phi(x)-y_0} \leq \alpha t_M^2$.
We choose $t>0$ small enough so that $C$ contains the same subset of $g(-e_2)$ when translated distance less than or equal to $t$ in the horizontal direction.
Note that this also ensures that the boundaries of those translations contain no point of $g(-e_2)$.
The nearest point from $g(-e_2)$ to the line $y=y_0$ is at distance $\eps/2 = \frac{t}{n/4 - 1} > 4t/n$ so it is enough to have $\alpha t^2 \leq 4t/n$, that is, we set $t = \min\{t_M,\frac{4}{\alpha n}\}$.

With these choices, every $p \in G$ induces a different $T_C$-$k$-set of $\pset$ with $k=n/2$ and therefore $a_{n/2}(\pset) \geq \card{G} \geq n^2/16$.
\end{proof}

To understand the scope of \cref{thm:c2cross}, note that the condition on $C$ is satisfied when $C$ is the interior of a convex body with $C^2$ boundary.
Also, no triangle satisfies the assumptions of \cref{thm:c2cross}.

\subsection{Bounds on the growth function}

To put our results on the number of $T_C$-$k$-sets and $T_C$-$k$-edges in context, we state some basic bounds on the growth function of set system $(\RR^2,T_C)$, namely the maximum number of subsets of a set of $n$ point in $\RR^2$ induced by translations of $C$.
For simplicity some of our bounds have extra assumptions on $C$ that may not be necessary.

The \emph{growth function} \cite{VapChe71} of set system $(\RR^2,T_C)$ is given by 
\[ 
n \mapsto \max_{\pset \subseteq \RR^2, \card{\pset}=n} \bigl\lvert{\{\pset \cap (x+C) \suchthat x \in \RR^2\}}\bigr\rvert. 
\]

\begin{proposition}\label{cor:growth}
Let $C \subseteq \RR^2$ be the interior of a strictly convex body.
The growth function of $(\RR^2,T_C)$ is at most $n^2-n+2$.
\end{proposition}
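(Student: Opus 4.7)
The plan is to recast the problem in parameter space (the space of translations $x \in \RR^2$). For each $p \in \pset$, define the region $R_p := p - C$ and the Jordan curve $S_p := p - \bd C$ bounding it; then $p \in x + C$ iff $x \in R_p$, and $p \in \bd(x+C)$ iff $x \in S_p$. Since $C$ is the interior of a strictly convex body, each $\bd C$ is a Jordan curve, so each $S_p$ is a Jordan curve and $R_p$ is its open interior. Thus the subset $\pset \cap (x+C)$ is completely determined by the indicator vector $\bigl(\one[x \in R_p]\bigr)_{p\in\pset}$, which is constant on each connected open face of the arrangement $\mathcal{A}$ of the $n$ curves $\{S_p : p \in \pset\}$.

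Next I would argue that the growth function is bounded by the number of faces of $\mathcal{A}$. Two distinct open faces must be separated by some $S_p$, so they lie on opposite sides of $S_p$ and therefore induce subsets that differ in membership of $p$; hence distinct open faces give distinct subsets. Moreover, for $x$ lying on $\bigcup_p S_p$, every $p \in \pset$ with $x \in S_p$ satisfies $p \in \bd(x+C)$ and hence $p \notin x+C$ (because $C$ is open), so $\pset \cap (x+C)$ coincides with the subset realized in an adjacent open face on the appropriate side of each such curve. Consequently no translation contributes a subset beyond those counted by the open faces of $\mathcal{A}$.

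Then I would bound the number of faces of $\mathcal{A}$. By \cref{lem:edges} (which assumes strict convexity of $C$), for any two distinct points $p,q\in\pset$ at most two translations $x+C$ have $p,q \in \bd(x+C)$, which translates to the parameter-space statement $\lvert S_p \cap S_q \rvert \leq 2$. So $\mathcal{A}$ is an arrangement of $n$ pseudo-circles. I will prove by induction on $n$ that such an arrangement has at most $n^2 - n + 2$ faces. The base case $n=1$ gives exactly $2$ faces. For the inductive step, when a new pseudo-circle is added it meets the previous $n-1$ curves in at most $2(n-1)$ points and is thereby subdivided into at most $2(n-1)$ arcs; each arc lies in a single existing open face and splits it into two, adding at most $2(n-1)$ new faces (and when there are no intersections, the new closed curve splits just one face, which is compatible with the bound for $n \geq 2$). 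Summing gives $F(n) \leq 2 + \sum_{i=2}^n 2(i-1) = n^2 - n + 2$.

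The main obstacle is the bookkeeping in the face-to-subset correspondence, in particular making sure that (i)~the regions $R_p$ are genuine open Jordan domains so that crossing $S_p$ really flips membership of $p$, and (ii)~translations with $x$ lying on one or more of the curves $S_p$ do not produce new subsets; both points rely crucially on $C$ being an \emph{open} strictly convex set. The face-counting induction itself is a routine Euler-formula style argument, and the $\leq 2$-intersection bound is already supplied by \cref{lem:edges}.
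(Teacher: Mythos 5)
Your proposal is correct and takes essentially the same approach as the paper: both pass to the dual arrangement of the $n$ translates of $\bd C$ in the space of translations, use the fact that two such translates meet in at most two points, and count the cells incrementally via $2+\sum_{k=2}^{n}2(k-1)=n^{2}-n+2$. The differences are cosmetic — the paper phrases the dualization through dual set systems and Venn-diagram cells and asserts the two-point intersection bound directly, whereas you derive it from \cref{lem:edges} and add two extra remarks (the injectivity of the face-to-subset map, which is not needed for the upper bound and can in fact fail, and the treatment of translations lying on the curves, a degenerate case whose full analysis the paper's own proof also elides).
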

\begin{proof}
For the proof we use the notions of a dual set system and dual growth function.
Given set system $(X, \mathcal{R})$, its \emph{dual set system} is $(\mathcal{R}, X^*)$, where $X^*$ is the family of sets of the form $\{ R \in \mathcal{R} \suchthat x \in R \}$ for $x \in X$.
The \emph{dual growth function} of $(X, \mathcal{R})$ is the growth function of its dual set system.

The first step is to notice that $(\RR^2,T_C)$ corresponds to the dual set system of $(\RR^2,T_{-C})$: 
a range $x-C \in T_{-C}$ is corresponds to point $x \in \RR^2$ and a point $y \in \RR^2$ corresponds to range $y+C$, together with the equivalence ``$y \in x-C$ is equivalent to $x \in y+C$''.
In this way, the growth function of $(\RR^2,T_C)$ is the dual growth function of $(\RR^2,T_{-C})$.

The second step is to bound the dual growth function of $(\RR^2,T_{-C})$.
Its value at $n$ is equal to the maximum number of nonempty cells in the Venn diagram of $n$ translations of $-C$.
It is bounded by the maximum number of connected components of the complement of $n$ translations of $\bd(-C)$, or, equivalently, $\bd(C)$.
Adding $n$ translations of $\bd(C)$ one by one, this number of connected components is 2 for $n=1$ and, using the fact that two translations of $\bd(C)$ intersect in at most two points\details{add proof sometime?}, it increases by at most $2(k-1)$ when the $k$th translation is added.
Therefore the number of connected components is at most $n^2-n+2$.
\end{proof}

For clarity we state the following summarizing result:
\begin{theorem}\label{thm:strictlygrowth}
Let $C \subseteq \RR^2$ be the interior of a strictly convex body with $C^2$ boundary.
The growth function of $(\RR^2,T_C)$ is $\Theta(n^2)$ (where the constants in $\Theta$ depend only on $C$).
\end{theorem}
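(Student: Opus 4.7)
The plan is to combine the upper bound from \cref{cor:growth} with the lower bound from \cref{thm:c2cross}. Since $C$ is strictly convex, \cref{cor:growth} immediately gives that the growth function of $(\RR^2, T_C)$ is at most $n^2-n+2 = O(n^2)$, establishing one direction.

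For the lower bound, I would first observe that the growth function at $n$ is at least $a_k(\pset)$ for any $\pset$ of size $n$ and any fixed $k$: by definition, every $T_C$-$k$-set of $\pset$ is a distinct subset of the form $\pset \cap (x+C)$, and each such subset contributes one element to the collection counted by the growth function. Thus it suffices to exhibit, for every $n$, a set $\pset$ of $n$ points and a value $k$ for which $a_k(\pset) = \Omega(n^2)$.

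Next I would verify that the hypotheses of \cref{thm:c2cross} are met. Choose $u=e_1$ and $v=e_2$. Since $C$ is strictly convex with $C^2$ boundary, the Gauss map from $\bd C$ to the unit circle is a continuous bijection, so there exist (unique) points $a,b,c,d \in \bd C$ with outer normals $u, v, -u, -v$, respectively. The global $C^2$ assumption implies the boundary is $C^2$ at each of these four points in the sense of \cref{def:C2}: each has a unique supporting tangent line, and after the affine change of coordinates placing this tangent at the horizontal axis with $C$ above it, the boundary is locally the graph of a convex function $g$ whose Taylor expansion has the form $g(t) = at^2 + o(t^2)$ with $a>0$, where strict positivity of $a$ follows from strict convexity of $C$.

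Applying \cref{thm:c2cross} with, say, $c=1/4$, $c'=3/4$, and $k = \floor{n/2}$ then produces, for every $n$, a set $\pset$ with $\card{\pset}=n$ and $a_k(\pset) = \Omega(n^2)$, yielding the matching lower bound via the observation in the second paragraph. The closest thing to an obstacle is the verification that global $C^2$-ness together with strict convexity implies the pointwise conditions of \cref{def:C2}; the rest is a direct concatenation of the earlier results.
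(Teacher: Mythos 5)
Your overall route is exactly the paper's: the upper bound is \cref{cor:growth}, the lower bound is \cref{thm:c2cross}, and the growth function at $n$ dominates $a_k(\pset)$ for any single $k$. That skeleton is correct. The one step that does not hold as written is your verification of the hypotheses of \cref{thm:c2cross}: you assert that, in the local graph representation $g(t)=at^2+o(t^2)$, ``strict positivity of $a$ follows from strict convexity of $C$.'' This is false. Strict convexity only rules out segments in the boundary and says nothing about second-order behavior: a strictly convex body with globally $C^2$ boundary can have vanishing curvature at a boundary point (e.g.\ one whose boundary near the point with outer normal $-e_2$ is locally the graph of $t\mapsto t^4$), and at such a point $a=0$, so \cref{def:C2} fails there. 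Since you fix $u=e_1$, $v=e_2$ in advance, your argument breaks for such a body.

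The repair is to use the freedom in \cref{thm:c2cross} to choose $u$ and $v$ depending on $C$. For a strictly convex body with $C^2$ boundary the Gauss map $\nu\colon\bd C\to S^1$ is a homeomorphism whose Jacobian is the curvature; since the total curvature of $\bd C$ is $2\pi$, points of positive curvature exist, and the set of outer normals attained at zero-curvature points is a null subset of $S^1$ (Sard, or directly from the area formula). Hence one can choose linearly independent $u,v$ so that the four boundary points with outer normals $u,v,-u,-v$ all have strictly positive curvature, and continuity of the curvature then gives \cref{def:C2} on a whole neighborhood of each of these points, as \cref{thm:c2cross} requires. With that substitution the rest of your argument goes through and coincides with the paper's (essentially one-line) proof; the paper itself only asserts, in the remark following \cref{thm:c2cross}, that a $C^2$ boundary suffices, without spelling out this point.
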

\begin{proof}
Immediate from \cref{cor:growth,thm:c2cross}.
\end{proof}

In order to prove our results in \cref{sec:lowerbound} in more generality, we state here a weaker bound on the growth function with weaker assumptions on $C$.
\begin{theorem}\label{thm:growth}
Let $C \subseteq \RR^2$ be the interior of a convex body.
The VC-dimension of $(\RR^2,T_C)$ is at most $3$.
The growth function of $(\RR^2,T_C)$ is at most $\binom{n}{0}+\binom{n}{1}+\binom{n}{2}+\binom{n}{3} \leq (en/3)^3$.
\end{theorem}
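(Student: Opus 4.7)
The plan is to establish the VC-dimension bound by a planar-arrangement argument and then invoke the Sauer-Shelah lemma. For the VC-dimension part I would argue by contradiction, assuming some $\{p_1, p_2, p_3, p_4\} \subseteq \RR^2$ is shattered by $T_C$. Via the duality $p_i \in x + C \iff x \in p_i - C$, shattering is equivalent to saying that the four translates $D_i := p_i - C$ of the open convex set $-C$ realize all $2^4 = 16$ Venn regions of $\RR^2$ as non-empty. Since each non-empty Venn region must contain at least one face of the planar arrangement of the closed Jordan curves $\bd D_1, \dotsc, \bd D_4$, it is enough to bound the number of faces of that arrangement strictly below $16$.

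The key input for the face count is that the boundaries of two translates of a convex body in $\RR^2$ meet in at most two points, which is precisely the observation already used in the proof of \cref{cor:growth}. When $\bd C$ contains line segments this may fail literally, so I would first reduce to the strictly convex case by an arbitrarily small Hausdorff perturbation of $C$, using that because $C$ is open and we are dealing with a fixed finite point set, one can choose witnesses in the interior of each non-empty Venn region so that all $16$ inclusion/exclusion conditions are strict and hence preserved under a sufficiently small perturbation. Adding the four curves one at a time to the arrangement: the first curve contributes $1$ new face beyond the initial plane, and for $k \geq 2$ the $k$-th curve crosses the previous $k-1$ curves in at most $2(k-1)$ points, is split into at most $2(k-1)$ arcs, and each arc increases the face count by exactly $1$. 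The total is at most $1 + 1 + 2 + 4 + 6 = 14 < 16$, giving the desired contradiction.

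Once VC-dimension at most $3$ is established, the Sauer-Shelah lemma immediately gives that the growth function is at most $\sum_{i=0}^3 \binom{n}{i}$. For the final inequality $\sum_{i=0}^d \binom{n}{i} \leq (en/d)^d$, valid for $n \geq d \geq 1$, I would apply the standard VC-theory estimate: multiplying each term on the left by $(n/d)^{d-i} \geq 1$ and then invoking the binomial theorem yields
\[
\sum_{i=0}^d \binom{n}{i} \leq (n/d)^d \sum_{i=0}^{n} \binom{n}{i}(d/n)^i = (n/d)^d (1 + d/n)^n \leq (en/d)^d.
\]
The main obstacle I anticipate is the perturbation reduction to strictly convex $C$; once that is set up using openness of $C$ together with Hausdorff-continuity of membership on a fixed finite point set, the rest of the argument is a routine assembly of classical tools (the at-most-two-point-intersection fact, the incremental Euler-type count of faces, and Sauer-Shelah).
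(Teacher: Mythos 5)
Your route is genuinely different from the paper's: the paper simply cites a result of \cite{NASZODI201077} for the VC-dimension of translates of a convex body (extending it to interiors via a scaling observation) and then applies Sauer--Shelah, whereas you give a self-contained ``four curves cannot form a Venn diagram'' argument via the dual arrangement. That argument is sound in outline --- the duality $p_i \in x+C \iff x \in p_i - C$ is correct, membership is constant on faces of the arrangement of the four boundary curves, and the incremental count $1+1+2+4+6 = 14 < 16$ is exactly the classical computation for four closed convex curves pairwise meeting in at most two points. However, there is one step that fails as written: the claim that ``because $C$ is open \dots one can choose witnesses in the interior of each non-empty Venn region so that all 16 inclusion/exclusion conditions are strict.'' This is false for non-strictly-convex $C$. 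Take $C=(0,1)^2$, $p_1=(0.5,0)$, $p_2=(0.5,1)$, $p_3=(0.5,0.5)$: the subset $\{p_3\}$ is realized (e.g.\ by $x=(0,0)$, since $p_1,p_2 \in \bd\bigl((0,1)^2\bigr)$ and $C$ is open), but \emph{every} translate realizing it must have $p_1$ on its bottom edge and $p_2$ on its top edge, so no witness avoids the boundaries. Dually, the Venn region $D_3 \cap D_1^c \cap D_2^c$ is a horizontal segment contained in $\bd D_1 \cap \bd D_2$ and contains no face of the arrangement, so ``16 non-empty regions'' does not imply ``16 faces,'' and your perturbation to a strictly convex body is not justified either, since an arbitrarily small perturbation can destroy these boundary-only witnesses.

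The gap is repairable, and the repair is precisely the scaling observation the paper uses for its own reduction: if translates of the open set $C$ shatter $\{p_1,\dotsc,p_4\}$, replace $C$ by $C_\lambda = z + \lambda(C-z)$ for $z \in C$ and $\lambda<1$ close to $1$. Points excluded by a witness $x+C$ lie outside the open set $x+C \supseteq \cl(x+C_\lambda)$, hence strictly outside the shrunken body, while included points remain strictly inside for $\lambda$ close enough to $1$ (finitely many conditions). This produces strict witnesses with positive margins, after which your Hausdorff-small perturbation to a strictly convex body, the face count, and the Sauer--Shelah step all go through. So the argument needs this one additional idea inserted before the perturbation; as submitted, the reduction step is not valid.
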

\begin{proof}
A special case of a result in \cite{NASZODI201077} establishes that the VC-dimension of $(\RR^2,T_C)$ is at most $3$ when $C \subseteq \RR^2$ is a convex body.
The bound extends to our case (interior of a convex body) using the observation that if translations of the interior a convex body $C$ shatter a given finite set of points then translations of a scaled down $\cl C$ also shatter the same set.
The rest follows from the Sauer-Shelah lemma.
\end{proof}
The VC-dimension bound is tight: $C$ equal to any fixed triangle is a tight example.

\subsection{Lower bound for \texorpdfstring{$T_C$-$k$}{TC-k}-sets, probabilistic, some \texorpdfstring{$k$ proportional to $n$}{k proportional to n}}\label{sec:lowerbound}

In this section we show, for some $k$ proportional to $n$, an $\tilde\Omega(n^{3/2})$ lower bound for the expected number of $T_C$-$k$-sets for a random sample of $n$ points from the uniform distribution in a set $A \subseteq \RR^2$ sufficiently large to contain translations of $C$.
The restriction to a subset $A$ is necessary as there is no uniform distribution in $\RR^2$.
Our argument uses crucially the fact that translations of $C$ that are contained in $A$ have the same probability under the uniform distribution in $A$.
The minor technical complications introduced by the fact that $A$ is bounded could be avoided by considering a similar set system of translations of a disk (say) on the surface of the two-dimensional sphere (or translations of a shape on the flat torus) with the uniform distribution (a version not studied in this paper).

The idea of the proof is the following: 
First show that for a random sample $X$ of $n$ points in $A$, with high probability the number of induced subsets by translations of $C$ contained in $A$ is $\tilde\Omega(n^2)$ (\cref{lem:lowerboundrandom}).
Then, by VC's uniform convergence theorem, with high probability each translation of $C$ contained in $A$ contains $cn \pm \tilde O(\sqrt{n})$ points from $X$ for some $c$.
Therefore, by the pigeonhole principle there are $\tilde\Omega(n^{3/2})$ induced subsets of $X$ that contain exactly the same number of points, that is, $X$ has $\tilde\Omega(n^{3/2})$ $T_C$-$k$-sets for some $k$.

We start by showing that if two translations of $C$ are far apart then they have a small intersection.
\begin{lemma}\label{lem:distance}
Let $C \subseteq \RR^2$ be the interior of a convex body that contains a unit ball.
If $\norm{x} \leq 1$, then 
\[
    \area \bigl(C \cap (x+C)\bigr) \leq (1-\norm{x}/2) \area(C).
\]
\end{lemma}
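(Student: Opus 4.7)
My plan is to reduce the problem to a one-dimensional estimate by slicing in the direction of $x$. After rotating coordinates I may assume $x = (a,0)$ with $a := \norm{x} \leq 1$. For each $t \in \RR$, let $\ell(t)$ denote the length of the horizontal slice $C \cap (\RR \times \{t\})$, which is an interval by convexity of $C$.

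The first step is a Fubini-type identity for both the intersection and its complement. The horizontal slice of $x+C$ at height $t$ is the slice of $C$ at that height translated by $a$ along the $x$-axis; since these are two intervals of equal length $\ell(t)$ with centers separated by $a$, their overlap has length $\max(\ell(t)-a,0)$. Integrating in $t$ yields
\begin{equation*}
\area\bigl(C \cap (x+C)\bigr) = \int_\RR \max(\ell(t)-a,0)\,dt,
\end{equation*}
and equivalently
\begin{equation*}
\area\bigl(C \setminus (x+C)\bigr) = \int_\RR \min(\ell(t),a)\,dt.
\end{equation*}

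In the second step I will reduce the claim to a pointwise comparison. The desired inequality is equivalent to $\area(C \setminus (x+C)) \geq (a/2)\area(C)$, which would follow at once from the pointwise bound $\min(\ell(t),a) \geq (a/2)\,\ell(t)$ for almost every $t$. When $\ell(t) \leq a$ the pointwise bound is immediate since $a \leq 1 \leq 2$; when $\ell(t) > a$ it reduces to the requirement $\ell(t) \leq 2$.

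The main obstacle is therefore the third step: showing that every horizontal slice of $C$ has length at most $2$. Here I plan to invoke the hypothesis that $C$ contains a unit ball, using a convex-geometric argument that ties the extent of $C$ in the shift direction to the inscribed ball of radius $1$; once the bound $\ell(t) \leq 2$ is established for every $t$, integrating the pointwise estimate over $t$ gives $\area(C \setminus (x+C)) \geq (a/2)\area(C)$, which is the stated inequality. I expect this last step to be the delicate one, since the hypothesis of containing a unit ball only gives a lower bound on the extent of $C$ in each direction, so the argument will have to combine this containment with the convexity of $C$ carefully rather than treat the two slices independently.
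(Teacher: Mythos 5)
Your reduction is carried out correctly up to its final step: the Fubini identities, the equivalence of the claim with $\area\bigl(C \setminus (x+C)\bigr) \geq (\norm{x}/2)\area(C)$, and the case analysis showing that the pointwise bound $\min(\ell(t),a) \geq (a/2)\ell(t)$ amounts to $\ell(t) \leq 2$ are all fine. The gap is that the third step cannot be carried out: the hypothesis that $C$ \emph{contains} a unit ball gives only a lower bound on the slices of $C$ and can never yield the upper bound $\ell(t) \leq 2$ you need. Worse, this is not a defect of your particular slicing strategy---the inequality of the lemma is false without such an upper bound. Take $C$ to be the open disk of radius $2$ (which contains a unit ball) and $a = \norm{x}$ small: then $\area\bigl(C \cap (x+C)\bigr) = \area(C) - 4a + O(a^2)$ (the area lost is $a$ times the width $4$ of $C$ perpendicular to $x$), while the lemma demands a loss of at least $(a/2)\area(C) = 2\pi a > 4a$. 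So no completion of your outline, or of any other argument, can establish the statement as written.

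For comparison, the paper's proof takes a different route: it notes that $f(x) = \area\bigl(C\cap(x+C)\bigr) = (\one_C * \one_{-C})(x)$ is log-concave, that $f(0) = \area(C) \geq \pi$, and that the one-sided directional derivative of $f$ at $0$ is at most $-4$ because $C$ contains a unit ball; concavity of $\log f$ then gives $f(x) \leq f(0)e^{-\norm{x}}$ after asserting $Df(0)(v)/f(0) \leq -4/\pi$. That division goes the wrong way: it requires $\area(C) \leq \pi$, whereas the hypothesis gives $\area(C) \geq \pi$, so the paper's argument breaks at exactly the point where yours does, only less visibly. Your slicing computation is in fact the more informative one: since $\ell(t) \geq 1 \geq \norm{x}$ for all $t$ in an interval of length $\sqrt{3}$ determined by the inscribed unit ball, it gives $\area\bigl(C\setminus(x+C)\bigr) = \int \min(\ell(t),\norm{x})\,\ud t \geq \sqrt{3}\,\norm{x}$, i.e., the correct conclusion under the stated hypotheses is the additive bound $\area\bigl(C\cap(x+C)\bigr) \leq \area(C) - \norm{x}$; the multiplicative form holds only under an additional upper bound on the extent of $C$ in the direction of $x$ (e.g., $C$ contained in a unit ball rather than containing one). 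Either corrected statement suffices for the application in \cref{lem:lowerboundrandom} after adjusting the constant $c$ there.
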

\begin{proof}
Consider the function $f(x) = \area \bigl(C \cap (x+C)\bigr)$.
It is logconcave (by the Pr\'ekopa-Leindler inequality and the fact that $f(x) = \one_C(x) * \one_{-C}(x)$) (where $\one_C(x)$ is the indicator function of $C$, namely one if $x \in C$ and zero otherwise).
Also, $f(0) = \area(C) \geq \pi$.
In other words, $\log f(x)$ is concave and, while it is not differentiable at $x=0$, we can use directional derivatives and tangent rays at $x=0$ to upper bound it by a function of the form $x \mapsto \log f(0) + c\norm{x}$, where $c<0$ is an upper bound on the one-sided directional derivative.

We calculate a suitable $c$ now.
The one-sided directional derivative at $0$ along unit vector $v \in \RR^2$ is 
\[
    D f(0)(v) = -2 \length(\text{projection of $C$ onto line perpendicular to $v$}) \leq -4
\] 
(from the analysis of the movement of chords of $C$ parallel to $v$\details{as a chord moves by distance $\Delta t$, it contributes $2\Delta t$ and the family of chords is parameterized by values in projection of $C$ onto line perpendicular to $v$}).
Thus, $\log f(0) = \log(\area C)$ and $D (\log f)(0) (v) = Df(0)(v)/f(0) \leq -4/\pi \leq -1$ (i.e. we can take $c=-1$) and these estimates with concavity of $\log f(x)$ give $\log f(x) \leq \log \bigl(\area(C)\bigr) - \norm{x}$.
That is, $f(x) \leq \area(C) e^{-\norm{x}}$.
We use the inequality $e^{-t} \leq 1-t(1-1/e)$ for $t \in [0,1]$ to conclude that if $\norm{x}\leq 1$, then $f(x) \leq \area(C)\bigl(1-\norm{x}(1-1/e)\bigr)$.
The claim follows.
\end{proof}

We show now that the number of ranges induced by translations of $C$ on certain random sets of $n$ points is $\tilde\Omega(n^2)$.
Because this is meant to be used in the context of $T_C$-$k$-sets, we show a slightly stronger bound for ranges (induced by translations) that \emph{do not contain points on their boundaries}.
\begin{lemma}[lower bound on number of ranges, probabilistic]\label{lem:lowerboundrandom}
Let $C \subseteq \RR^2$ be the interior of a convex body.
Let $A \subseteq \RR^2$ be a compact set such that $2C \subseteq A$.\details{Where is this assumption used? Ans: It guarantees that the translations of $C$ from our grid $G$ in the proof are contained in $A$ so that the probability computation goes through.}
Let $X$ be a set of $n$ iid uniformly random points in $A$.
Let $t > 0$.
Then there exists a constant $c_{\ref*{lem:lowerboundrandom}}>0$ that depends only on $A$, $C$ and $t$ such that with probability at least $1 -1/ n^t$, 
\[
    \bigl\lvert \{X \cap (x+C) \suchthat x+C\subseteq A, X \cap \bd(x+C) = \emptyset\} \bigr\rvert \geq c_{\ref*{lem:lowerboundrandom}} \left(\frac{n}{\log n} \right)^2.
\]
\end{lemma}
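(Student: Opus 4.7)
The plan is to exhibit a family $\{x+C : x \in G\}$ of $\Omega((n/\log n)^2)$ translations contained in $A$ whose induced subsets on $X$ are all distinct and whose boundaries avoid $X$.

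First, I would set up the grid. Let $A' = \{x \in \RR^2 : x + C \subseteq A\}$. Using convexity of $C$ together with the assumption $2C \subseteq A$, one verifies that $C \subseteq A'$, so $\area(A') \geq \area(C) > 0$. Set $\delta = c_0 (\log n)/n$ for a constant $c_0 > 0$ to be chosen below, and let $G$ be the intersection of a shifted $\delta \ZZ^2$ lattice with $A'$; a standard packing argument yields $|G| \geq c_1(A,C)(n/\log n)^2$ for $n$ large.

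Next, I would control pairwise distinctness. For $x \neq y$ in $G$, $\norm{x-y} \geq \delta$. After an invertible linear rescaling so that $C$ contains a unit ball (which only alters constants by factors depending on $C$), \cref{lem:distance} applies and, for $n$ large enough that $\delta$ lies in the range of validity, gives
\[
\area\bigl((x+C) \triangle (y+C)\bigr) = 2\bigl(\area(C) - \area(C \cap (y-x+C))\bigr) \geq \kappa_C \norm{x-y} \geq \kappa_C \delta,
\]
where $\kappa_C > 0$ depends only on $C$. Consequently, the probability that the $n$ iid uniform points in $A$ all miss this symmetric difference is at most
\[
\Bigl(1 - \frac{\kappa_C \delta}{\area(A)}\Bigr)^n \leq n^{-c_0 \kappa_C/\area(A)}.
\]
Choose $c_0$ so that $c_0 \kappa_C/\area(A) \geq t + 5$. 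A union bound over the $O(n^4)$ pairs in $G$ then shows that, with probability at least $1 - n^{-t}$, every pair of translations in the family $\{x+C : x \in G\}$ induces a distinct subset of $X$.

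For the boundary condition, note that $\bd(x+C)$ has two-dimensional Lebesgue measure zero for each fixed $x$ (it is the boundary of a planar convex body), so $X \cap \bd(x+C) = \emptyset$ almost surely; since $G$ is finite, this holds simultaneously for all $x \in G$ with probability one. Intersecting with the event of the previous paragraph yields the claimed lower bound with $c_{\ref*{lem:lowerboundrandom}} = c_1(A,C)$. The main obstacle is constant-tracking: one must pick $c_0$ large enough, in terms of $t$, $\area(A)$, and the inradius of $C$, for the single-pair failure probability $n^{-c_0 \kappa_C/\area(A)}$ to absorb the $O(n^4)$ factor coming from the union bound while still leaving room for $n^{-t}$. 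Apart from this bookkeeping and the straightforward rescaling needed to invoke \cref{lem:distance} beyond its ``$C$ contains a unit ball'' hypothesis, the argument is routine.
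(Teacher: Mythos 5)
Your proposal is correct and follows essentially the same route as the paper: a grid of $\Theta((n/\log n)^2)$ translations with spacing $\Theta(\log n / n)$, a pairwise symmetric-difference lower bound via \cref{lem:distance}, a union bound over pairs, and almost-sure avoidance of the boundaries. The only differences are cosmetic (you place the grid in $A' = \{x : x+C \subseteq A\}$ rather than in a unit ball inside $C$, and you are in fact more careful than the paper in taking the union bound over the $O(n^4)$ pairs of grid points).
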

\begin{proof}
Let $X = \{X_1, \dotsc, X_n\}$.
Let $B$ be the ball with center 0 and radius 1.
Without loss of generality (up to scaling and translation), $B \subseteq C$.

We will first construct a packing of $n^2/(c \log n)^2$ translations of $C$ with centers in $B$ with area of pairwise symmetric difference at least about $\log(n)/n$, for some $c>0$ to be determined later. 
Let $G$ be an $n/\gap$-by-$n/\gap$ grid of points with gap $\gap/n$ between adjacent rows and columns of points and contained in $B$.
Every pair of points in $G$ is then at distance at least $\gap/n$, and therefore for all $x, y \in G$ with $0 < \norm{x-y} \leq 1$ we have $\area \bigl((x+C)\Delta (y+C)\bigr) = 2\Bigl(\area(C)-\area \bigl(C \cap ((y-x)+C)\bigr) \Bigr) \geq \area(C) \norm{y-x} \geq \area(C) \gap/n$ (using \cref{lem:distance}).
The bound extends to all $x,y \in G$ with $x\neq y$ by monotonicity.

We will now show that with probability at least $1-o(1)$ each $x+C$ with $x \in G$ induces a different range on $X$.
It is enough to show that for all $x, y \in G$ with $x \neq y$ we have $\bigl((x+C)\Delta (y+C)\bigr) \cap X \neq \emptyset$.
Setting $c = (t+2) \area(A)/\area(C)$ , the probability of this event for some $x,y$ is
\begin{align*}
    \pr\bigl((\forall i \in [n]) X_i \notin (x+C)\Delta (y+C)\bigr)
    &\leq \left(1-\frac{\area \bigl((x+C)\Delta (y+C)\bigr)}{\area(A)}\right)^n \\
    &\leq \left(1-\frac{\area(C)}{\area(A)} \frac{\gap}{n}\right)^n \\
    &\leq  e^{-(t+2)\log n} \\
    &=1/n^{t+2}.
\end{align*}
Thus, with probability at least $1-n^2 / n^{t+2}= 1-1/n^t$ our event holds for all pairs $x,y$.

Finally, $X \cap \cup_{x\in G} \bd(x+C) = \emptyset$ a.s. The claim follows.
\end{proof}

We now state and prove our probabilistic lower bound for $T_C$-$k$-sets for some $k$ proportional to $n$:
\begin{theorem}\label{thm:vcavg}
Let $C \subseteq \RR^2$ be the interior of a convex body.
Let $A \subseteq \RR^2$ be a compact set such that $2C \subseteq A$.
Let $X$ be a set of $n$ iid uniformly random points in $A$.
Let 
\[
    a'_{k}(X) := \bigl\lvert\{X \cap (x+C) \suchthat x+C\subseteq A, \card{X \cap (x+C)}=k, X \cap \bd(x+C) = \emptyset\}\bigr\rvert
\]
(that is, $a'_{k}(X)$ is the number of $T_C$-k-sets of $X$ induced by translations of $C$ contained in $A$).
Let $p = \area(C)/\area(A)$.
Then there exists a function $k(n)$ such that $\e\bigl(a_{k(n)}(X)\bigr) \geq \e\bigl(a'_{k(n)}(X)\bigr) \geq \Omega(n^{3/2}/(\log n)^{5/2})$ and $\abs{k(n) - pn} \leq O(\sqrt{n \log n})$ (where function $k(n)$ and the constants in $O, \Omega$ depend only on $A$ and $C$).
\end{theorem}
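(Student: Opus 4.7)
The plan is exactly the outline sketched just before the statement: combine \cref{lem:lowerboundrandom} (which guarantees many distinct induced subsets) with the uniform convergence theorem of Vapnik and Chervonenkis (which forces all induced subsets to have nearly the same size), and then conclude by a pigeonhole argument applied to expectations.

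First, I would apply \cref{lem:lowerboundrandom} with $t = 1$ to obtain an event $\mathcal{E}_1$ with $\pr(\mathcal{E}_1) \geq 1 - 1/n$ on which
\[
\sum_k a'_k(X) \geq c_{\ref*{lem:lowerboundrandom}} \left(\frac{n}{\log n}\right)^2.
\]
Second, I would note that under the uniform distribution on $A$, every range $x+C \subseteq A$ has the same probability $p = \area(C)/\area(A)$, and that the set system $(\RR^2,T_C)$ has VC dimension at most $3$ by \cref{thm:growth}. Invoking the Vapnik--Chervonenkis uniform convergence theorem with failure probability $1/n$ then yields a constant $c' > 0$ (depending only on $C$) and an event $\mathcal{E}_2$ with $\pr(\mathcal{E}_2) \geq 1 - 1/n$ on which every range $R = x+C \subseteq A$ satisfies $\lvert |X \cap R|/n - p \rvert \leq c'\sqrt{\log n / n}$; equivalently, $|X \cap R|$ lies in the interval $I := [pn - c'\sqrt{n\log n},\, pn + c'\sqrt{n\log n}]$.

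On $\mathcal{E}_1 \cap \mathcal{E}_2$, every subset counted by $\sum_k a'_k(X)$ arises from a range contained in $A$ and therefore has cardinality in $I$, so
\[
\sum_{k \in I \cap \ZZ} a'_k(X) \cdot \one_{\mathcal{E}_1 \cap \mathcal{E}_2} \geq c_{\ref*{lem:lowerboundrandom}} \left(\frac{n}{\log n}\right)^2 \one_{\mathcal{E}_1 \cap \mathcal{E}_2}.
\]
Taking expectations gives a lower bound of $(1 - 2/n) c_{\ref*{lem:lowerboundrandom}}(n/\log n)^2$ for $\sum_{k \in I \cap \ZZ} \e(a'_k(X))$. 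Since $|I \cap \ZZ| = O(\sqrt{n\log n})$, pigeonhole yields a fixed integer $k(n) \in I \cap \ZZ$ with $\e(a'_{k(n)}(X)) \geq \Omega(n^{3/2}/(\log n)^{5/2})$, and $a_{k(n)}(X) \geq a'_{k(n)}(X)$ by definition completes the argument.

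The main obstacle is essentially bookkeeping: one must carefully check that the form of the VC uniform convergence theorem used gives the deviation $O(\sqrt{\log n /n})$ with failure probability $1/n$ (which follows from the constant VC dimension bound of \cref{thm:growth} together with the standard Vapnik--Chervonenkis inequality applied to the sub-system of ranges contained in $A$), and that $\e$ and the pigeonhole combine in the order used above. Once these are in place, the log factors assemble to $(\log n)^{5/2} = (\log n)^2 \cdot \sqrt{\log n}$ as expected, and the bound on $|k(n) - pn|$ follows directly from $k(n) \in I$.
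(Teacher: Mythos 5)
Your proposal is correct and follows essentially the same route as the paper's proof: Lemma \ref{lem:lowerboundrandom} for the $\tilde\Omega(n^2)$ count of induced ranges, the Vapnik--Chervonenkis uniform convergence theorem (via the growth/VC bound of \cref{thm:growth}) to confine all range sizes to an interval of width $O(\sqrt{n\log n})$ around $pn$, and a pigeonhole over expectations to extract a fixed $k(n)$. The only cosmetic difference is that the paper plugs the growth-function bound $s(2n)\le (2n)^3$ directly into the VC inequality rather than citing the VC dimension, which does not change the argument.
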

\begin{proof}
Let $(A,\mathcal{R})$ be the set system where $\mathcal{R}$ is the family of translations of $C$ contained in $A$.
From \cref{thm:growth} we have that the growth function $s(n)$ of $(A,\mathcal{R})$ satisfies $s(n) \leq n^3$.

Fix $n$. 
For $R \in \mathcal{R}$, let $\hat\pdist(R) = \card{X \cap R}/n$.
From \cite[Theorem 2]{VapChe71} (VC's uniform convergence theorem)\footnote{In order to apply VC's uniform convergence theorem, we need to verify that the function $\sup_{R \in \mathcal{R}}(\cdot)$ as defined in \cref{eq:good} is measurable, i.e., that it is a random variable. This can be verified by observing that $\mathcal{R}$ is a \emph{permissible} class of subsets of $A$. See \cite[Appendix C]{Pollard} for the definition of permissible classes and a proof of the measurability of suprema in this context. One can see that the class $\mathcal{R}$ is permissible by indexing it by translation and verifying that the requirements for permissibility are met.} we have, for $n \geq 2/\eps^2$,
\begin{equation}\label{eq:good}
\pr \biggl(\sup_{R \in \mathcal{R}} \abs{\hat\pdist(R) - p} > \eps \biggr) \leq 4 s(2n) e^{-\eps^2 n/8}.
\end{equation}
Set $\eps = 4\sqrt{\frac{3 \log 2n }{n}}$ so that the rhs is at most $1/n^3$.
If we denote by $G$ the complement of the event in \eqref{eq:good}, we have $\pr(G) = 1-o(1)$.

Let $\mathcal{R}_X = \{X \cap R \suchthat R \in \mathcal{R}, X \cap \bd R = \emptyset\}$. 
From \cref{lem:lowerboundrandom} with $t=1$ and notation $f(n) = c_{\ref*{lem:lowerboundrandom}}(n/\log n)^2$, we get
\begin{equation}\label{eq:lowerboundrandom}
    \pr \bigl(
        \card{\mathcal{R}_X} \geq f(n) 
        \bigr) \geq 1-o(1).
\end{equation}
Let $H$ denote the event in \eqref{eq:lowerboundrandom}.

\details{Old pigeonhole argument:
For $X \in G$ and for all $S \in R$ we have that $\abs{\card{S \cap X} - pn} \leq n \eps = O(\sqrt{n \log n})$.
For $X \in H$ there are at least $f(n)$ choices of $S \cap X$ with $S \in R$.
Therefore, for $X \in G \cap H$ and by the pigeonhole principle, over the choices of $S \cap X$ there is a value of $\card{S \cap X}$, say $k(X)$, that appears at least $\Omega( f(n)/ \sqrt{n \log n })$ times. 
The value $k(X)$ is such that $\abs{k - pn} \leq O(\sqrt{n \log n})$.

}
To conclude, we will show that there is a value $k(n)$ that is independent of $X$ and makes $\e\bigl(a'_k(X) \bigr)$ large.
We have $\pr(X \in G \cap H) = 1- o(1)$.
Also for $X \in G \cap H$ we have 
\[
\sum_{k \in [pn - n\eps,pn+n\eps]} a'_k(X) = \card{\mathcal{R}_X} \geq f(n).
\]
Therefore 
\[
    \e\left(\sum_{k \in [pn - n\eps,pn+n\eps]} a'_k(X) \right) \geq f(n) \pr(X \in G \cap H).
\]
Reordering,
$\sum_{k \in [pn - n\eps,pn+n\eps]} \e\bigr( a'_k(X) \bigr) \geq f(n) \pr(X \in G \cap H)$.
Thus, there exists $k(n) \in [pn - n\eps,pn+n\eps]$ such that
$\e\bigr( a'_k(X) \bigr) \geq f(n) \pr(X \in G \cap H)/(2n\eps+1)$.
That is (using $n\eps = O(\sqrt{n \log n})$),
$\e\bigr( a'_k(X) \bigr) \geq \Omega \bigl( n^{3/2}/(\log n)^{5/2} \bigr) $.

For the bound on $\e \bigl(a_k(X) \bigr)$, from the definitions we have $a_k(X) \geq a'_k(X)$.
\end{proof}

\section{Conclusion and open questions}\label{sec:conclusion}
We conclude with some open questions and possible directions for future work.
\begin{itemize}
\item
\textbf{Bound on the number of points of intersection of $Z(f)$ and $G_k$.}
Can the answer to \cref{question} given in \cref{lem:intersections} be improved? We believe it may be possible to improve the bound to $O(nr)$ for the following reason: The $k$-edge graph $G_k$ of a set of $n$ points behaves somewhat like a degree $n$ algebraic curve when it comes to intersecting it with a line. In particular, a degree $n$ algebraic curve and the $k$-edge graph of a set of $n$ points both have the property that a line can intersect them at most $n$ times unless the line intersects them infinitely many times. One might expect this phenomenon to also hold for arbitrary algebraic curves, not just lines. If this is the case, the bound on the number of intersection points between a degree $r$ algebraic curve and the $k$-edge graph of a set of $n$ points should be $O(nr)$ as in the case of the intersection of a degree $r$ algebraic curve and a degree $n$ algebraic curve.
\item
\textbf{Higher dimensions.}
The polynomial partitioning theorem becomes more powerful in higher dimensions. It may be possible to apply it to the $k$-set problem in dimensions higher than $2$. The issue is that there is no analogue of the convex chains decomposition in dimension higher than $2$, so this would likely require the discovery of a new property of $k$-sets or $k$-facets. 

\end{itemize}

\begin{acknowledgements}
We would like to thank the reviewers for their helpful suggestions. In particular, we are grateful to the anonymous reviewer who noticed that the bound in \cref{thm:5/4} could be significantly improved with a simple modification to the proof. This material is based upon work supported by the National Science Foundation under Grants CCF-2006994, CCF-1657939, CCF-1422830 and CCF-1934568.
\end{acknowledgements}


\bibliographystyle{abbrv}
\bibliography{bib}

\end{document}